\newtheorem{theorem}[subsubsection]{Theorem}
\newtheorem{lemma}[subsubsection]{Lemma}
\newtheorem{proposition}[subsubsection]{Proposition}
\newtheorem{corollary}[subsubsection]{Corollary}
\theoremstyle{definition}
\newtheorem{definition}[subsubsection]{Definition}
\newtheorem{example}[subsubsection]{Example}
\newtheorem{remark}[subsubsection]{Remark}
\newtheorem{convention}[subsubsection]{Convention}
\numberwithin{equation}{subsubsection}
\tikzset{auto}
\tikzset{empty/.style={circle,inner sep=0pt,minimum size=6mm}}
\tikzset{emptyvt/.style={circle,inner sep=0pt,minimum size=0mm}}
\tikzset{plain/.style={circle,draw,very thick,
inner sep=0pt,minimum size=6mm}}
\tikzset{fatplain/.style={rounded rectangle,draw,very thick,minimum size=6mm}}
\tikzset{bigplain/.style={rounded rectangle,draw,very thick,minimum size=.8cm}}
\tikzset{yellowvt/.style={circle,draw,fill=yellow,very thick,inner sep=0pt,minimum size=6mm}}
\tikzset{bluevt/.style={circle,draw,fill=blue!20,very thick,inner sep=0pt,minimum size=6mm}}
\tikzset{greenvt/.style={circle,draw,fill=green!30,very thick,inner sep=0pt,minimum size=6mm}}
\tikzset{redvt/.style={circle,draw,fill=red!30,very thick,inner sep=0pt,minimum size=6mm}}
\tikzset{arrow/.style={->,thick}}
\tikzset{dashedarrow/.style={->,dashed,thick}}
\tikzset{dottedarrow/.style={->,dotted,thick}}
\tikzset{mapto/.style={|->,thick}}
\tikzset{implies/.style={thick,double,double equal sign distance,-implies}}
\tikzset{line/.style={thick}}
\tikzset{dottedline/.style={dotted,thick}}
\tikzset{dashedline/.style={dashed,thick}}
\tikzset{inputleg/.style={<-,thick}}
\tikzset{outputleg/.style={->,thick}}
\tikzset{dottedinput/.style={<-,dotted,thick}}
\newcommand{\adjoint}{
\nicearrow\xymatrix{ \ar@<2pt>[r] & \ar@<2pt>[l]}}
\renewcommand{\hookrightarrow}{\nicexy{\ar@{^{(}->}[r] &}}
\newcommand{\nicearrow}{\SelectTips{cm}{10}}
\newcommand{\nicexy}{\nicearrow\xymatrix@C+10pt@R+10pt}
\newcommand{\pushoutsymbol}{\rotatebox{315}{$\Longrightarrow$}}
\newcommand{\pushout}{\ar@{}[dr]|(0.75){\pushoutsymbol}}
\newcommand{\boxprod}{\mathbin\square}
\newcommand{\comp}{\circ}
\renewcommand{\to}{\longrightarrow}
\newcommand{\dotover}[1]{\underset{#1}{\centerdot}}
\newcommand{\frakC}{\mathfrak{C}}
\newcommand{\fC}{\mathfrak{C}}
\newcommand{\sds}{\mathsf{ds}}
\newcommand{\sI}{\mathsf{I}}
\newcommand{\sJ}{\mathsf{J}}
\newcommand{\tensorunit}{\mathbb{I}}
\newcommand{\sn}{\mathsf{n}}
\newcommand{\sO}{\mathsf{O}}
\newcommand{\sP}{\mathsf{P}}
\newcommand{\ua}{\underline{a}}
\newcommand{\ub}{\underline{b}}
\newcommand{\uc}{\underline{c}}
\newcommand{\smallop}{{\scalebox{.5}{$\text{op}$}}}
\newcommand{\cald}{\mathcal{D}}
\newcommand{\caldop}{\mathcal{D}^{\smallop}}
\newcommand{\calm}{\mathcal{M}}
\newcommand{\calmc}{\calm^{\fC}}
\newcommand{\set}{\mathsf{Set}}
\newcommand{\sset}{\mathsf{sSet}}
\newcommand{\symseq}{\mathsf{SymSeq}}
\newcommand{\symseqc}{\symseq_{\fC}}
\newcommand{\symseqcm}{\symseqc(\calm)}
\newcommand{\alg}{\mathsf{Alg}}
\newcommand{\Alg}{\mathsf{Alg}}
\newcommand{\sigmaop}{\Sigma^{\smallop}}
\newcommand{\sigmaopb}{\Sigma^{\smallop}_{\smallbrb}}
\newcommand{\sigmaopc}{\Sigma^{\smallop}_{\smallbrc}}
\newcommand{\opc}{\mathsf{Op}^{\fC}}
\newcommand{\opcset}{\mathsf{Op}^{\fC}_{\mathsf{Set}}}
\newcommand{\operad}{\mathsf{Operad}}
\newcommand{\operadsigma}{\operad}
\newcommand{\operadsigmac}{\operad^{\fC}}
\newcommand{\operadsigmacm}{\operad^{\fC}({\calm})}
\newcommand{\tree}{\mathsf{Tree}}
\newcommand{\reducedtree}{\mathsf{rTree}}
\newcommand{\pofc}{\Sigma_{\frakC}}
\newcommand{\pofcop}
{\pofc^{\scalebox{.6}{$\mathrm{op}$}}}
\newcommand{\smallprof}[1]
{\raisebox{.05cm}{\scalebox{0.8}{#1}}}
\newcommand{\cjbrbj}
{\smallprof{$\binom{c_j}{[\ub_j]}$}}
\newcommand{\singlecibi}
{\smallprof{$\binom{c_i}{\ub_i}$}}
\newcommand{\ccsingle}
{\smallprof{$\binom{c}{c}$}}
\newcommand{\singledbrb}
{\smallprof{$\binom{d}{[\ub]}$}}
\newcommand{\dub}
{\smallprof{$\binom{d}{\ub}$}}
\newcommand{\duc}
{\smallprof{$\binom{d}{\uc}$}}
\newcommand{\singledbrc}
{\smallprof{$\binom{d}{[\uc]}$}}
\newcommand{\singledempty}
{\smallprof{$\binom{d}{\varnothing}$}}
\newcommand{\smallbr}[1]
{\raisebox{.03cm}{\scalebox{0.5}{#1}}}
\newcommand{\smallbrb}{\smallbr{$[\ub]$}}
\newcommand{\smallbrc}{\smallbr{$[\uc]$}}
\newcommand{\sigmabra}{\Sigma_{\smallbr{$[\ua]$}}}
\newcommand{\sigmabrb}{\Sigma_{\smallbr{$[\ub]$}}}
\newcommand{\sigmabrbj}{\Sigma_{\smallbr{$[\ub_j]$}}}
\newcommand{\sigmabrc}{\Sigma_{\smallbr{$[\uc]$}}}
\newcommand{\sigmabrr}{\Sigma_{\smallbr{$[r]$}}}
\newcommand{\sigmabrbop}{\sigmabrb^{\smallop}}
\newcommand{\sigmabrbjop}{\sigmabrbj^{\smallop}}
\newcommand{\sigmabrcop}{\sigmabrc^{\smallop}}
\newcommand{\sigmabrcopd}{\sigmabrcop \times \{d\}}
\newcommand{\sigmaofc}{\pofc}
\newcommand{\sigmacop}{\pofcop}
\newcommand{\sigmacopc}{\sigmacop \times \fC}
\newcommand{\nofc}{\mathbb{N}(\fC)}
\newcommand{\dbrch}{([\uc];d)}
\newcommand{\andspace}{\qquad\text{and}\qquad}
\newcommand{\vertex}{\mathsf{Vt}}
\renewcommand{\lim}{\mathsf{lim}\,}
\DeclareMathOperator*{\colim}{\mathsf{colim}\,}
\DeclareMathOperator{\Hom}{Hom}
\DeclareMathOperator{\Aut}{Aut}
\DeclareMathOperator{\End}{End}
\DeclareMathOperator{\id}{id}
\DeclareMathOperator{\Id}{Id}
\DeclareMathOperator{\Kan}{\mathsf{Lan}}
\DeclareMathOperator{\Ob}{Ob}
\begin{document}

\title{Relative left properness of colored operads}

\author{Philip Hackney, Marcy Robertson, and Donald Yau}
\address{Matematiska institutionen\\ Stockholms universitet\\ 106 91 Stockholm \\ Sweden}
\email{hackney@math.su.se} 
\address{Department of Mathematics \\ The University of California Los Angeles \\ Los Angeles, CA}\email{marcy.robertson@unimelb.edu.au}
\address{Department of Mathematics \\ The Ohio State University at Newark \\ Newark, OH}
\email{dyau@math.osu.edu}

\begin{abstract}
The category of $\mathfrak{C}$-colored symmetric operads admits a cofibrantly generated model category structure. In this paper, we show that this  model structure satisfies a relative left properness condition, ie that the class of 
weak equivalences between $\Sigma$-cofibrant operads is closed under cobase change along cofibrations. We also provide an example of Dwyer which shows that the model structure on $\fC$-colored symmetric operads is not left proper.
\end{abstract}

\maketitle


\section{Introduction}

Operads are combinatorial devices that encode families of algebras defined by multilinear 
operations and relations. Common examples are the operads $\mathbb{A}$, $\mathbb{C}$ and 
$\mathbb{L}$ whose algebras are associative, associative and commutative, and Lie algebras, 
respectively. Colored operads are a bit more exotic, with what is likely the most famous example being Voronov's ``Swiss-Cheese operad,'' which models the genus-zero moduli spaces that appear in open-closed string theory. Other examples of colored operads\footnote{Colored operads are also sometimes called (symmetric) multicategories in the literature.} encode complicated algebraic structures such as operadic modules, enriched categories, and even categories of operads themselves. The study of model category structures on categories of colored operads has found many recent applications including the rectification of diagrams of operads~\cite{bm07} and the construction of simplicial models for $\infty$-operads~\cite{cmc}.

Our goal in this paper is to further the study of the Quillen model category structure of colored 
operads initiated in ~\cite{rob11,cmc,cav14}. Specifically we are interested in understanding if the category of colored, symmetric operads is \textbf{left proper}; ie we wish to know if weak equivalences between \emph{all} colored, symmetric operads are closed under cobase change along cofibrations.  The main result of this paper is to say that this is not the case, but we give sufficient conditions on a monoidal model category $\calm$ in order for the 
model category structure of $\calm$-enriched, colored, symmetric operads to be \textbf{relatively} left proper, ie for the class of 
weak equivalences between $\Sigma$-cofibrant operads to be closed under cobase change along cofibrations (Theorem \ref{oalg-model-left-proper}).  Recall that in any model category, the class of weak equivalences between cofibrant objects is closed under cobase change along cofibrations. The class of $\Sigma$-cofibrant operads is much larger than the class of cofibrant operads; in particular, this class includes small examples such as the associative operad $\mathbb{A}$.
If one is instead willing to consider the category of \emph{reduced} (or \emph{constant-free}) operads (those satisfying $\sP\binom{c}{\varnothing} = \varnothing$), then Batanin and Berger \cite{bb14} prove a strict left properness result.

The question of (relative) left properness for categories of symmetric operads has many immediate applications.   As an example, left properness makes it easier to identify homotopy pushouts since, in a left proper model category, any pushout along a cofibration is a homotopy pushout.  Relative left properness allows us to make similar statements.

Furthermore, understanding when left properness holds allows us to describe the rectification of homotopy coherent diagrams and weak maps between homotopy $\sO$-algebras, as first proposed by Berger and Moerdijk in \cite[Section 6]{bm07}.  
More explicitly, it is well known that the structure of a model category on the category of $\calm$-enriched operads is important for the study of up to homotopy algebras over an operad such as $\mathbb{A}_{\infty}$-algebras and $\mathbb{E}_{\infty}$-algebras which are respectively associative and commutative ``up to homotopy.'' The deformations of algebraic structures and morphisms between algebraic structures are controlled by up-to-homotopy resolutions of (colored) operads. These resolutions include the W-construction of Boardman and Vogt \cite{bv}, the cobar-bar resolutions of Ginzburg and Kapranov \cite{gk94} and Kontsevich and Soibelman \cite{ks00}, and the Koszul resolutions of Fresse \cite{fre04}. In their paper \cite{bm07}, Berger and Moerdijk show that a coherent theory of up-to-homotopy resolutions of operadic algebras is provided by a Quillen model category structure on $\mathfrak{C}$-colored operads in a general monoidal model category $\calm$. 
(Relative) left properness is one way to establish when these resolutions can be rectified, in the sense of being weakly homotopy equivalent to strict $\sO$-algebras.

\noindent\textbf{Related Work}
To the knowledge of the authors, the idea of relative left properness, and much of the inspiration for this paper, was first established in the thesis of Spitzweck~\cite{spitzweck-thesis} where he considers semimodel structures of categories of operads in general monoidal model categories. Similarly, Dwyer and Hess \cite{dh} and Muro \cite{mur14} established a left properness result which is identical to that of Theorem~\ref{oalg-model-left-proper} for nonsymmetric, monochromatic operads enriched in simplicial sets, respectively, monoidal model categories. Of particular note, Muro's proof requires that his monoidal model categories satisfy weaker conditions than those imposed on the monoidal model categories in this work. The stronger conditions in Theorem~\ref{oalg-model-left-proper} are due to both the extra complexity introduced by the addition of the symmetric group actions and the authors' desire to exhibit the most direct proof of this result which still applies in many situations.

It must also be noted that one could obtain similar results using the techniques of the recent paper of Batanin-Berger~\cite{bb14}; see remark \ref{remark_on_bb}.
The actual definition of relative left properness in \cite{bb14} is slightly different, though morally the same, as that used in Sptizweck~\cite{spitzweck-thesis}, Muro~\cite{mur14}, and this paper and we have made note of similarities in their results and our own throughout this paper. 
Again, the authors of this work have made stronger assumptions on our enriching monoidal model category, as it is our belief that these assumptions allowed for greater clarity in the arguments while still being applicable in most cases of interest.
These assumptions also allow for generalizations to more complicated cases such as relative left properness of dioperads and wheeled properads \cite{hry4} (the latter of which is inaccessible to the Batanin-Berger machinery; see \cite[10.8]{bb14}), which will serve as key components of the authors' larger body of work constructing models for $\infty$-wheeled properads.

\noindent \textbf{Acknowledgments} 
The authors would like to thank Giovanni Caviglia and Kathryn Hess for enlightening discussions and for pointing out errors in earlier drafts of this paper. We would also like to thank Bill Dwyer for allowing us to use his counter-example to left properness of colored operads in Section~\ref{counter-example}. This counter-example was also independently obtained by Caviglia as part of his thesis work. 
Finally, the authors would like to thank the anonymous referee for several insightful comments on an earlier draft of this paper.

\section{Colored Operads and Algebras}
In this section, we briefly recall the definitions of colored operads and algebras over colored operads. 

\subsection{Colors and Profiles}

Throughout, let $(\calm,\otimes,\tensorunit)$ be a closed, symmetric monoidal category with all small colimits. 
Let $\varnothing$ denote the initial object of $\calm$ and $\Hom(X,Y) \in \calm$ the internal hom object.
We will briefly give the necessary definitions and notations regarding colored objects in $\calm.$ A more complete discussion of the following definitions can be found in \cite{jy2} . 

\begin{definition}[Colored Objects]
\label{def:profiles}
Fix a non-empty set  of \textbf{colors}, $\fC$. 
\begin{enumerate}
\item
A \textbf{$\fC$-profile} is a finite sequence of elements in $\fC$,
\[
\uc = (c_1, \ldots, c_m) = c_{[1,m]}
\]
with each $c_i \in \fC$.  If $\fC$ is clear from the context, then we simply say \textbf{profile}. The empty $\fC$-profile is denoted $\varnothing$, which is not to be confused with the initial object in $\calm$.  Write $|\uc|=m$ for the \textbf{length} of a profile $\uc$.
\item
An object in the product category $\prod_{\fC} \calm = \calm^{\fC}$ is called a \textbf{$\fC$-colored object in $\calm$}; similarly a map of $\fC$-colored objects is a map in $\prod_{\fC} \calm$.  A typical $\fC$-colored object $X$ is also written as $\{X_a\}$ with $X_a \in \calm$ for each color $a \in \fC$.
\item Fix $c \in \fC$.  An $X \in \calmc$ is said to be \textbf{concentrated in the color $c$} if $X_d = \varnothing$ for all $c \not= d \in \fC$.
\item
Similarly, fix $c\in\fC$.  For $f : X \to Y \in \calm$ we say that $f$ is said to be \textbf{concentrated in the color $c$} if both $X$ and $Y$ are concentrated in the color $c$.
\end{enumerate}
\end{definition}

Now we are ready to define the colored version of $\Sigma$-objects underlying the category of colored operads. These objects are also sometimes called symmetric sequences, $\Sigma$-modules, or collections in the literature.

\begin{definition}[Colored Symmetric Sequences]
\label{def:colored-sigma-object}
Fix a non-empty set $\fC$.
\begin{enumerate}
\item
If $\ua$ and $\ub$ are $\fC$-profiles, then a \textbf{map} (or \textbf{left permutation}) $\sigma : \ua \to \ub$ is a permutation $\sigma \in \Sigma_{|\ua|}$ such that
\[
\sigma\ua = (a_{\sigma^{-1}(1)}, \ldots , a_{\sigma^{-1}(m)}) = \ub
\]
This necessarily implies $|\ua| = |\ub| = m$.
\item
The \textbf{groupoid of $\fC$-profiles}, which has objects the $\mathfrak{C}$-profiles, and left permutations as the isomorphisms, is denoted by $\pofc$.  The opposite groupoid, $\pofcop,$ is the groupoid of $\fC$-profiles with \textbf{right permutations}
\[
\ua\sigma = (a_{\sigma(1)}, \ldots , a_{\sigma(m)})
\]
as isomorphisms.
\item
The \textbf{orbit} of a profile $\ua$ is denoted by $[\ua]$.  The maximal connected sub-groupoid of $\pofc$ containing $\ua$ is written as $\sigmabra$.  Its objects are the left permutations of $\ua$.  There is an \textbf{orbit decomposition} of $\pofc$
\begin{equation}
\label{pofcdecomp}
\pofc \cong \coprod_{[\ua] \in \pofc} \sigmabra,
\end{equation}
where there is one coproduct summand for each orbit $[\ua]$ of a $\fC$-profile.  \item
Define the diagram category
\begin{equation}
\label{colored-symmtric-sequence}
\symseqcm = \calm^{\sigmacopc},
\end{equation}
whose objects are called \textbf{$\fC$-colored symmetric sequences} or just \textbf{symmetric sequences} when $\fC$ is understood.  The decomposition \eqref{pofcdecomp} implies that there is a decomposition
\begin{equation}
\label{symseqdecomp}
\symseqcm 
\cong 
\prod_{\dbrch \in \sigmacopc} \calm^{\sigmabrcopd},
\end{equation}
where $\sigmabrcopd \cong \sigmaopc$.  
\item
For $X \in \symseqcm$, we write
\begin{equation}
\label{sigmacopd-component}
X\singledbrc \in \calm^{\sigmabrcopd} \cong \calm^{\sigmabrcop}
\end{equation}
for its $\dbrch$-component.  For $(\uc;d) \in \sigmacopc$ (ie $\uc$ is a $\fC$-profile and $d \in \fC$), we write
\begin{equation}
\label{dc-component}
X\duc \in \calm
\end{equation}
for the value of $X$ at $(\uc;d)$.
\item
Write $\nofc$ for the set $\Ob(\sigmacopc)$, ie an element in $\nofc$ is a pair $(\uc;d) \in \sigmacopc$.
\end{enumerate}
\end{definition}

\begin{remark}
\label{soneobject}
In the case where $\fC = \{*\}$, for each integer $n \geq 0$, there is a unique $\fC$-profile of length $n$, usually denoted by $[n]$.  We have $\Sigma_{[n]} = \Sigma_n$, which is just the symmetric group $\Sigma_n$ regarded as a one-object groupoid.  So we have $\nofc = \mathbb{N}$,
\[
\pofc = \coprod_{n \geq 0} \Sigma_n = \Sigma, 
\andspace
\symseqcm = \calm^{\sigmacopc} = \calm^{\sigmaop}.
\]
So one-colored symmetric sequences are symmetric sequences (also known as $\Sigma$-objects and collections) in the usual sense.
\end{remark}

Unless otherwise specified, we will assume that $\fC$ is a \textbf{fixed}, non-empty set of colors.

\subsection{Colored Circle Product}

We define $\fC$-colored operads to be monoids in $\symseqcm$ with respect to the $\fC$-colored circle product. In order to define the latter, we need the following definition.

\begin{definition}[Tensored over a Groupoid]
\label{def:tensorover}
Suppose $\cald$ is a small groupoid, $X \in \calm^{\caldop}$, and $Y \in \calm^{\cald}$.  Define the object $X \otimes_{\cald} Y \in \calm$ as the colimit of the composite
\[
\nicexy{
\cald \ar[r]^-{\cong \Delta} 
& \caldop \times \cald \ar[r]^-{(X,Y)}
& \calm \times \calm \ar[r]^-{\otimes}
& \calm,
}\]
where the first map is the composite of the diagonal map and the isomorphism $\cald \times \cald \cong \caldop \times \cald$.
\end{definition}

We mainly use the construction $\otimes_{\cald}$ when $\cald$ is the finite connected groupoid $\sigmabrc$ for some orbit $[\uc] \in \pofc$.

\begin{convention}
For an object $A \in \calm$, $A^{\otimes 0}$ is taken to mean $\tensorunit$, the $\otimes$-unit in $\calm$.
\end{convention}

\begin{definition}[Colored Circle Product]
\label{def:colored-circle-product}
Suppose $X,Y  \in \symseqcm$, $d \in \fC$, $\uc = (c_1,\ldots,c_m) \in \pofc$, and $[\ub] \in \pofc$ is an orbit.
\begin{enumerate}
\item
Define the object $Y^{\uc} \in \calm^{\pofcop} \cong \prod_{[\ub] \in \pofc} \calm^{\sigmabrbop}$ as having the $[\ub]$-component
\begin{equation}
\label{ytensorc}
Y^{\uc}([\ub]) 
=
\coprod_{\substack{\{[\ub_j] \in \pofc\}_{1 \leq j \leq m} \,\mathrm{s.t.} \\
[\ub] = [(\ub_1,\ldots,\ub_m)]}} 
\Kan^{\sigmabrbop} 
\left[\bigotimes_{j=1}^m Y \cjbrbj\right] 
\in \calm^{\sigmabrbop}.
\end{equation}
The Kan extension in \eqref{ytensorc} is defined as shown:
\[
\nicexy{
\prod_{j=1}^m \sigmabrbjop 
\ar[d]_-{\mathrm{concatenation}} 
\ar[rr]^-{\prod Y \binom{c_j}{-}} 
&&
\calm^{\times m} \ar[d]^-{\otimes}
\\
\sigmabrbop \ar[rr]_-{\Kan^{\sigmabrbop}\left[\bigotimes Y(\scalebox{0.7}{\vdots})\right]}^-{\mathrm{left ~Kan~ extension}} 
&& \calm.
}\]
\item
Since we consider left permutations of $\uc$ in \eqref{ytensorc}, we obtain $Y^{[\uc]} \in \calm^{\pofcop \times \sigmabrc} \cong \prod_{[\ub] \in \pofc} \calm^{\sigmabrbop \times \sigmabrc}$ with components
\begin{equation}
\label{tensorbracket}
Y^{[\uc]}([\ub]) \in \calm^{\sigmabrbop \times \sigmabrc}.
\end{equation}
\item
Using the product decomposition \eqref{symseqdecomp} of $\symseqcm$, 
the \textbf{$\fC$-colored circle product} $X \circ Y \in \symseqcm$ is defined to have components
\begin{equation}
\label{xcircley}
(X \circ Y)\singledbrb 
= \coprod_{[\uc] \in \pofc} 
X\singledbrc \otimes_{\sigmabrc} Y^{[\uc]}([\ub]) \in \calm^{\sigmaopb \times \{d\}},
\end{equation}
where the coproduct is indexed by all the orbits in $\pofc$, as $d$ runs through $\fC$ and $[\ub]$ runs through all the orbits in $\pofc$.  The construction $\otimes_{\sigmabrc}$ is as defined in Definition \ref{def:tensorover}.
\end{enumerate}
\end{definition}

\begin{remark}
In the one-colored case (ie $\fC = \{*\}$), the $\fC$-colored circle product is equivalent to the circle product of $\Sigma$-objects in \cite{rezk} (2.2.3). An anonymous referee made the authors aware that the idea to first define the circle product through Day's convolution belongs to G.M. Kelly \cite{kelly}.
\end{remark}

The following observation is the colored version of \cite{harper-jpaa} (4.13).

\begin{proposition}
\label{circle-product-monoidal}
With respect to $\circ$, $\symseqcm$ is a monoidal category.
\end{proposition}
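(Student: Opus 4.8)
The plan is to mirror the proof of \cite{harper-jpaa} (4.13) in the one-colored case, with the symmetric-group bookkeeping done orbit by orbit using the decomposition \eqref{symseqdecomp}. The conceptual picture guiding the argument is that each $X \in \symseqcm$ induces an endofunctor $\widehat{X}$ of $\calmc$, with $\widehat{X}(W)_d = \coprod_{[\uc] \in \pofc} X\singledbrc \otimes_{\sigmabrc} (W_{c_1} \otimes \cdots \otimes W_{c_m})$, and that under $X \mapsto \widehat{X}$ the circle product of \eqref{xcircley} corresponds to composition of endofunctors; since composition of endofunctors is strictly associative and strictly unital, this already predicts the unit and associator, and the actual work is to carry out the corresponding identifications directly on symmetric sequences so that the argument is valid for any cocomplete closed symmetric monoidal $\calm$.

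First I would pin down the unit. Let $\sI \in \symseqcm$ be the symmetric sequence with $\sI\singledbrc = \tensorunit$ when $[\uc]$ is the orbit of the length-one profile $(d)$, and $\sI\singledbrc = \varnothing$ for every other orbit (in particular when $\uc = \varnothing$ or $|\uc| \neq 1$), with the only possible action. Expanding $\sI \circ Y$ and $Y \circ \sI$ via Definition \ref{def:colored-circle-product}: in $(\sI \circ Y)\singledbrb$ only the summand $[\uc] = [(d)]$ is non-initial, contributing $\tensorunit \otimes_{\Sigma_1} Y^{[(d)]}([\ub]) \cong Y\singledbrb$; in $(Y \circ \sI)\singledbrb$, the factor $\sI^{[\uc]}([\ub])$ computed from \eqref{ytensorc} is non-initial precisely when $[\ub] = [\uc]$, where it is the ``identity correspondence,'' so tensoring it against $Y\singledbrc$ over $\sigmabrc$ returns $Y\singledbrb$. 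These isomorphisms are natural in $Y$, giving the left and right unit isomorphisms, and together they say $\widehat{\sI} \cong \Id_{\calmc}$.

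Next I would construct the associativity isomorphism. Using that $\otimes$ preserves all small colimits in each variable (as $\calm$ is closed), that colimits commute with colimits, and the pointwise formula for the left Kan extensions in \eqref{ytensorc}, I would expand both $\bigl((X \circ Y) \circ Z\bigr)\singledbrb$ and $\bigl(X \circ (Y \circ Z)\bigr)\singledbrb$. Each unpacks to a colimit, indexed by a groupoid of ``two-level'' data (a choice of middle orbit together with a partition of a bottom orbit into blocks), of a single tensor product built from one component of $X$, components of $Y$, and components of $Z$, carrying the evident $\Sigma$-actions; the desired isomorphism is then a Fubini-type interchange of the two coproducts, together with associativity of $\otimes$ and of the $\otimes_{(-)}$ operations. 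I would then verify the pentagon: both ways around reduce to the canonical comparison of the corresponding ``three-level'' colimit, unique by the universal property of colimits — concretely, the statement that composition of the induced endofunctors $\widehat{X}$ is strictly associative — and the triangle identity is checked the same way against $\widehat{\sI} \cong \Id_{\calmc}$.

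The main obstacle is precisely this associativity isomorphism: one must track which of the groupoids $\sigmabra$, $\sigmabrb$, $\sigmabrc$ acts on which tensor factor as the nested coproducts and Kan extensions of \eqref{ytensorc}--\eqref{xcircley} are reorganized, and verify naturality throughout; everything else is formal. An alternative that shortens the coherence check, at the cost of a running presentability hypothesis on $\calm$, is to realize $\symseqcm$ as a full subcategory of the endofunctor category of $\calmc$ that is closed under the relevant colimits and transport the monoidal structure along $X \mapsto \widehat{X}$; since we want the statement for a general cocomplete closed symmetric monoidal $\calm$, the direct route above is the one I would write up.
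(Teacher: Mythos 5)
Your proposal is correct and takes essentially the approach the paper intends: the paper offers no proof of this proposition, simply asserting it as the colored version of \cite{harper-jpaa} (4.13), and your argument is a faithful colored generalization of that one-colored proof (unit concentrated in length-one profiles, co-Yoneda for the right unit law, Fubini for colimits plus cocontinuity of $\otimes$ for associativity, with coherence reduced to canonical comparisons of iterated colimits). Nothing further is needed.
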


\begin{remark}
We consider $\calmc$ as a subcategory of $\calm^{\nofc}$ via the inclusion 
\begin{align*}
	\fC &\rightarrow \nofc \\
	c & \mapsto \binom{c}{\varnothing}.
\end{align*}
We use this to consider $\sO \comp - $ as a functor with domain $\calmc$ in example \ref{monad example}.
\end{remark}

\subsection{Colored Operads as Monoids}\label{monoids}
In the previous section we show that the category of $\mathfrak{C}$-colored operads is a category of monoids ``with many objects.'' We make this explicit below. 

\begin{definition}
\label{def:colored-operad}
For a non-empty set $\fC$ of colors, denote by $\operadsigmacm$ or $\operadsigmac$, when $\calm$ is understood, the category of monoids \cite{maclane} (VII.3) in the monoidal category $(\symseqcm, \comp)$.  An object in $\operadsigmac$ is called a \textbf{$\fC$-colored operad} in $\calm$.
We write $\varnothing_{\fC}$ for the initial object in $\operadsigmac$.
\end{definition}

\begin{remark}
Unpacking Definition \ref{def:colored-operad}, a $\fC$-colored operad is equivalent to a triple $(\sO, \gamma, u)$ consisting of:
\begin{itemize}
\item
$\sO \in \symseqcm$,
\item
a \textbf{$\fC$-colored unit} map
\[
\nicexy{\tensorunit \ar[r]^-{u_c} & \sO\ccsingle} \in \calm
\]
for each color $c \in \fC$, and
\item
\textbf{operadic composition}
\begin{equation}
\label{operadic-comp}
\nicexy{
\sO\duc \otimes \bigotimes_{i=1}^m \sO\singlecibi \ar[r]^-{\gamma} 
& \sO\dub \in \calm
}
\end{equation}
for all $d \in \fC$, $\uc  = (c_1,\ldots,c_m) \in \sigmaofc$ with $m \geq 1$, and $\ub_i \in \sigmaofc$, where $\ub = (\ub_1,\ldots,\ub_m)$.
\end{itemize}
The triple $(\sO,\gamma,u)$ is required to satisfy the obvious associativity, unity, and equivariance axioms, the details of which can be found in \cite{jy2} (11.14).   The detailed axioms in the one-colored case can also be found in \cite{may97}.  This way of expressing a $\fC$-colored operad is close to the way an operad was defined in \cite{may72}.
\end{remark}

\begin{remark}
In the case $\fC = \{*\}$, write $\operadsigma$ for $\operadsigmac$. Objects of this category are called \textbf{$1$-colored operads} or \textbf{monochromatic operads}.  In this case we write $\sO(n)$ for the $([n]; *)$-component of $\sO \in \operadsigma$, where $[n]$ is the orbit of the $\{*\}$-profile consisting of $n$ copies of $*$ (this orbit has only one object).  Our notion of a $1$-colored operad agrees with the notion of an operad in, eg \cite{may97} and \cite{harper-jpaa}.  
Note that even for $1$-colored operads, our definition is slightly more general than the one in \cite{mss} (II.1.2) because in our definition, the $0$-component $\sO(0)$ corresponds to the empty profile, $\{*\}$.  In general, the purpose of the $0$-component (whether in the one-colored or the general colored cases) is to encode units in $\sO$-algebras.  Also note that in \cite{may72}, where an operad was first defined in the topological setting, the $0$-component was required to be a point.
\end{remark}

\begin{definition}
Suppose $n \geq 0$.  A $\fC$-colored symmetric sequence $X$ is said to be \textbf{concentrated in arity $n$} if
\[
|\uc| \not= n 
\quad \Longrightarrow\quad 
X\duc = \varnothing ~ \text{for all $d \in \fC$.}
\] 
\end{definition}

\begin{example}\label{monad example}
\begin{enumerate}
\item
A $\fC$-colored symmetric sequence concentrated in arity $0$ is precisely a $\fC$-colored object.  In the $\fC$-colored circle product $X \comp Y$ \eqref{xcircley}, if $Y$ is concentrated in arity $0$, then so is $X \comp Y$ because, by \eqref{ytensorc}, 
\[
\ub \not= \varnothing \quad \Longrightarrow \quad Y^{\uc}([\ub]) = \varnothing
\]
for all $\uc$.
In other words, there is a lift
\[
        \nicexy{
                \calmc \ar@{.>}[r] \ar@{->}[d] & \calmc \ar@{->}[d]  \\
                \symseqcm \ar@{->}[r]^-{\sO \comp -}  & \symseqcm.
        }
\]
So if $\sO$ is a $\fC$-colored operad, then the functor
\begin{equation}
\label{o-comp-monad}
\sO \comp - : \calmc \to \calmc
\end{equation}
defines a monad \cite{maclane} (VI.1)  whose monadic multiplication and unit are induced by the multiplication $\sO \comp \sO \to \sO$ and the unit $\varnothing_{\fC} \to \sO$, respectively.
\item
A $\fC$-colored operad $\sO$ concentrated in arity $1$ is exactly an $\calm$-enriched category with object set $\fC$.  In this case, the non-trivial operadic compositions correspond to the categorical compositions.  Restricting further to the $1$-colored case $(\fC = \{*\})$, a $1$-colored operad concentrated in arity $1$ is precisely a monoid in $\calm$.
\end{enumerate}
\end{example}




\subsection{Algebras over Colored Operads} 
 The category of representations over an operad $\sO$, is referred to, for classical reasons, as the category of \textbf{algebras over an operad}.  

\begin{definition}
\label{colored-operad-algebra}
Suppose $\sO$ is a $\fC$-colored operad.  The category of algebras over the monad \cite{maclane} (VI.2)
\[
\sO \comp - : \calmc \to \calmc
\]
in \eqref{o-comp-monad} is denoted by $\alg(\sO; \calm)$ or simply $\alg(\sO)$ when $\calm$ is understood.  Objects of $\alg(\sO)$ are called \textbf{$\sO$-algebras} (in $\calm$). 
\end{definition}

\begin{definition}
\label{def:asubc}
Suppose $A = \{A_c\}_{c\in \fC} \in \calm^{\fC}$ is a $\fC$-colored object.  For $\uc = (c_1,\ldots,c_n) \in \pofc$ with associated orbit $[\uc]$, define the object
\begin{equation}
\label{asubc}
A_{\uc} = \bigotimes_{i=1}^n A_{c_i} = A_{c_1} \otimes \cdots \otimes A_{c_n} \in \calm
\end{equation}
and the diagram $A_{\smallbrc} \in \calm^{\sigmabrc}$ with values
\begin{equation}
\label{asubbrc}
A_{\smallbrc}(\uc') = A_{\uc'}
\end{equation}
for each $\uc' \in [\uc]$.  All the structure maps in the diagram $A_{\smallbrc}$ are given by permuting the factors in $A_{\uc}$.
\end{definition}

\begin{remark}[Unwrapping $\sO$-Algebras]
From the definition of the monad $\sO \comp -$, an $\sO$-algebra $A$ has a structure map $\mu : \sO \comp A \to A \in \calmc$.  For each color $d \in \fC$, the $d$-colored entry of $\sO \comp A$ is
\begin{equation}
\label{o-comp-a-d}
(\sO \comp A)_d = \coprod_{[\uc] \in \sigmaofc} 
\sO\singledbrc \otimes_{\sigmabrc} A_{\smallbrc}.
\end{equation}
So the $d$-colored entry of the structure map $\mu$ consists of maps
\[
\nicexy{
\sO\singledbrc \otimes_{\sigmabrc} A_{\smallbrc}
\ar[r]^-{\mu}
& A_d \in \calm
}\]
for all orbits $[\uc] \in \sigmaofc$.  The $\otimes_{\sigmabrc}$ here means that we can unpack $\mu$ further into maps
\begin{equation}
\label{algebra-map-unpack}
\nicexy{
\sO\duc \otimes A_{\uc}
\ar[r]^-{\mu}
& A_d \in \calm
}
\end{equation}
for all $d \in \fC$ and all objects $\uc \in \sigmaofc$.  Then an $\sO$-algebra is equivalent to a $\fC$-colored object $A$ together with structure maps \eqref{algebra-map-unpack} that are associative, unital, and equivariant in an appropriate sense, the details of which can be found in \cite{jy2} (13.37).  The detailed axioms in the $1$-colored case can also be found in \cite{may97}.  Note that when $\uc = \varnothing$, the map \eqref{algebra-map-unpack} takes the form
\begin{equation}
\label{d-colored-units}
\nicexy{\sO \singledempty \ar[r]^-{\mu} & A_d}
\end{equation}
for $d \in \fC$.  In practice this $0$-component of the structure map gives $A$ the structure of $d$-colored units.  For example, in a unital associative algebra, the unit arises from the $0$-component of the structure map.
\end{remark}

\begin{remark}\label{endomorphism}
The \textbf{$\fC$-colored endomorphism operad}, $\End(A)$ is defined by
$$ \End\duc =\Hom_{\calm}(A_{\uc},A_{d}).$$ It is an elementary exercise to check that, for an $\fC$-colored operad $\sO$, an $\sO$-algebra $A$ is equivalent to a map of $\fC$-colored operads 
\[\nicexy{\sO \ar[r]^-{\mu} & \End(A).}\]
\end{remark}

Some important examples of colored operads and their algebras follow.

\begin{example}[Free Operadic Algebras]
\label{ex:free-algebra}
Fix a $\fC$-colored operad $\sO$.  There is an adjoint pair
\begin{equation}
\label{free-algebra-adjoint}
\nicexy{
\calmc \ar@<2pt>[r]^-{\sO \comp -} 
& \alg(\sO) \ar@<2pt>[l]
}
\end{equation}
in which the right adjoint is the forgetful functor.  The left adjoint takes a $\fC$-colored object $A$ to the object $\sO \comp A$ which has the canonical structure of an $\sO$-algebra, called the \textbf{free $\sO$-algebra of $A$}.  In particular, free $\sO$-algebras always exist.
\end{example}

\begin{example}
If $\sO$ is an $\calm$-enriched category, then the category of $\sO$-algebras is the $\calm$-enriched functor category $[\sO,\calm]$.
 \end{example}

\begin{example}[$\fC$-Colored Operads as Operadic Algebras]
\label{ex:operad-of-operad} Recall that $\nofc = \Ob(\pofcop \times \fC)$. 
For each non-empty set of colors $\fC$, there exists an $\nofc$-colored operad $\opc$ and an isomorphism
\begin{equation}
\label{colored-op-algebras}
\operadsigmac \cong \alg(\opc).
\end{equation}
So $\fC$-colored operads are equivalent to algebras over the $\nofc$-colored operad $\opc$.  This is a special case of \cite{jy2} (14.4), which describes any category of generalized props (of which $\operadsigmac$ is an example) as a category of algebras over some colored operad; in the case $\fC = \{ * \}$ this construction appears in \cite[1.5.6]{bm07}.  
As mentioned in Example \ref{ex:free-algebra}, it follows that \textbf{free $\fC$-colored operads} ($=$ free $\opc$-algebras) always exist.  
The construction of $\opc$ begins with an $\nofc$-colored operad $\opcset$ in the symmetric monoidal category of sets and Cartesian products.  
There is a strong symmetric monoidal functor
\begin{equation}\label{set to calm}
\set \to \calm, \quad S \longmapsto \coprod_S \tensorunit.
\end{equation}
The colored operad $\opc$ is the entry-wise image of $\opcset$ under this strong symmetric monoidal functor.  Therefore, if $\calm$ has a model structure in which $\tensorunit$ is cofibrant, then $\opc$ is entry-wise cofibrant.  
In fact, when $\tensorunit$ is cofibrant, a careful inspection of $\opc$ shows that its underlying symmetric sequence is cofibrant in $\symseqcm$.  This is a key example for us, and we will elaborate on it more later.
\end{example}

\subsection{Limits and Colimits of Colored Operadic Algebras} 
Limits of $\alg(\sO)$ are taken in the underlying category of colored objects $\calmc$ via the free-forgetful adjoint pair
\[
\nicexy{
\calmc \ar@<2pt>[r]^-{\sO \comp -} 
& \alg(\sO) \ar@<2pt>[l]
}\]
in \eqref{free-algebra-adjoint} for a $\fC$-colored operad.  The following observation is the colored version of a well known result (see, for example ~\cite[Prop 2.3.5]{rezk}, \cite[5.15]{harper-jpaa}, or the closely related \cite[Proposition II.7.2]{ekmm}).

\begin{proposition}
\label{algebra-bicomplete}
Suppose $\sO$ is a $\fC$-colored operad. Then the category $\alg(\sO)$ has all small limits and colimits, with reflexive coequalizers and filtered colimits preserved and created by the forgetful functor $\alg(\sO) \to \calmc$.
\end{proposition}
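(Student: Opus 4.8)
The plan is to deduce this from the general theory of monads and their algebras, together with the bicompleteness of $\calmc$. Since $\calmc = \calm^{\fC}$ is a product of copies of $\calm$, and $\calm$ is assumed to have all small colimits (and, being closed symmetric monoidal, all small limits as well, since the internal hom gives it well-behaved limits—though in fact we only need what is stated), the category $\calmc$ is bicomplete. The category $\alg(\sO)$ is by definition the category of algebras over the monad $T = \sO \comp -$ on $\calmc$ from \eqref{o-comp-monad}. For \emph{limits}, the standard fact is that the forgetful functor $U : \alg(T) \to \calmc$ creates all limits that exist in $\calmc$: given a diagram in $\alg(T)$, form its limit in $\calmc$, and the universal property of the limit together with the monad structure maps produces a unique $T$-algebra structure on it making the cone maps into algebra maps. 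So $\alg(\sO)$ has all small limits, computed in $\calmc$.

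For \emph{colimits}, the situation is more delicate: $U$ does not in general create arbitrary colimits, since $T$ need not preserve them. The key point is that $T = \sO \comp -$ preserves reflexive coequalizers and filtered colimits, and then one invokes the classical result (due to Linton, see also \cite[\S 9.3]{barr-wells} or the discussion in the references cited just before the statement) that if $U : \alg(T) \to \calmc$ has a left adjoint—which it does, namely the free algebra functor $A \mapsto (TA, \mu)$ from Example \ref{ex:free-algebra}—and $\calmc$ has reflexive coequalizers (resp. filtered colimits) which $T$ preserves, then $U$ creates those colimits, so $\alg(\sO)$ has them. General small colimits are then built from reflexive coequalizers and coproducts: every colimit can be expressed as a reflexive coequalizer of a pair of maps between coproducts of free algebras, so it suffices additionally to produce coproducts in $\alg(\sO)$. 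Coproducts of free algebras are free (on the coproduct of the generating colored objects, since $T$ is a left adjoint up to the monad structure—more precisely $\coprod_i (TA_i) $ receives a canonical $T$-algebra structure via the coproduct of the structure maps and $T$ applied to coproduct inclusions), and a general coproduct $\coprod_i B_i$ is realized as the reflexive coequalizer of $\coprod_i T T B_i \rightrightarrows \coprod_i T B_i$; the existence of such coequalizers then gives all coproducts, hence all colimits.

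Thus the steps, in order: (1) observe $\calmc$ is bicomplete; (2) recall $U : \alg(\sO) \to \calmc$ has the free-algebra left adjoint from Example \ref{ex:free-algebra}; (3) show $U$ creates limits, giving all small limits in $\alg(\sO)$; (4) show $T = \sO \comp -$ preserves reflexive coequalizers and filtered colimits—this reduces, via the explicit formula \eqref{o-comp-a-d} for $(\sO \comp A)_d$ as a coproduct over orbits $[\uc]$ of $\sO\singledbrc \otimes_{\sigmabrc} A_{\smallbrc}$, to the fact that in $\calm$ the functors $- \otimes X$ preserve all colimits (as $\calm$ is closed monoidal), that coproducts commute with these colimits, and that $\otimes_{\sigmabrc}$, being itself a colimit (Definition \ref{def:tensorover}) over the finite connected groupoid $\sigmabrc$, commutes with the filtered colimits and reflexive coequalizers in question; (5) invoke Linton's theorem to conclude $U$ creates reflexive coequalizers and filtered colimits; (6) build general colimits from reflexive coequalizers and coproducts as sketched above.

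The main obstacle is step (4): verifying that the colored circle product functor $\sO \comp -$ preserves reflexive coequalizers and filtered colimits. The subtlety is that $\sO \comp -$ involves not just tensor products and coproducts (which behave well) but the coend-like construction $\otimes_{\sigmabrc}$ over the automorphism groupoids of orbits; one must check that this construction—a colimit over a \emph{finite} connected groupoid—commutes with filtered colimits and reflexive coequalizers in $\calm$. This is where the hypothesis that reflexive coequalizers and filtered colimits are the relevant classes (rather than all colimits) is essential, and it is precisely the colored analogue of the one-colored computation in \cite[5.15]{harper-jpaa} that the statement cites; I would handle it by unwinding \eqref{o-comp-a-d} and reducing, factor by factor, to the corresponding elementary stability properties of colimits in the monoidal category $\calm$.
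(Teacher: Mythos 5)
Your argument is correct and is essentially the standard one the paper is invoking: the paper gives no proof of its own but defers to \cite[Prop 2.3.5]{rezk}, \cite[5.15]{harper-jpaa}, and \cite[Proposition II.7.2]{ekmm}, all of which proceed exactly as you do (limits created by the forgetful functor; the monad $\sO \comp -$ preserves reflexive coequalizers and filtered colimits because it is built from coproducts, colimits over finite groupoids, and multivariable tensors that preserve sifted colimits; then Linton's theorem and the presentation of arbitrary colimits via reflexive coequalizers of free algebras). The only point worth making explicit in your step (4) is that reflexive coequalizers and filtered colimits are \emph{sifted}, which is precisely why the multivariable tensor $A \mapsto A_{c_1}\otimes\cdots\otimes A_{c_n}$ preserves them even though it is only colimit-preserving in each variable separately.
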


\subsection{Model Structure on Colored Operadic Algebras} 
In this section we will assume that our cocomplete, closed, symmetric monoidal category $\calm$ comes with a compatible cofibrantly generated Quillen model category structure, ie we assume that $\calm$ is a \emph{monoidal model category} \cite[Def 3.1]{ss} with cofibrant tensor unit.

The category of $\fC$-colored objects, $\calmc$, admits a cofibrantly generated model category structure where weak equivalences, fibrations, and cofibrations are defined entrywise, as described in \cite{hirschhorn} (11.1.10).  
In this model category a generating cofibration in $\calmc = \prod_{\fC} \calm$ (ie a map in $\sI$) is a generating cofibration of $\calm$, concentrated in one entry.  Similarly, the set of generating acyclic cofibrations is $\sJ \times \fC$.  In addition, the properties of being simplicial, or proper, are inherited from $\calm$.

A functor $F$ between two symmetric monoidal categories is called \textbf{symmetric monoidal} if there is a unit $\tensorunit \to F(\tensorunit)$ and a binatural transformation \[ F(-) \otimes F(-) \Rightarrow F(- \otimes -)\] satisfying unit, associativity, and symmetry conditions \cite{maclane}.

\begin{definition} 
We say that $\calm$ admits \textbf{functorial path data} if there exist a symmetric monoidal
functor $Path$ on $\calm$ and monoidal natural transformations 
\begin{align*} s &: \Id \Rightarrow Path \\
d_0,d_1 &: Path \Rightarrow \Id
\end{align*} 
so that for any fibrant $X$ in $\calm$
\[\nicexy{ X \ar[r]^-{s} & Path(X) \ar[r]^-{d_0\times d_1} & X \times X}\] 
is a path object (ie $s$ is a weak equivalence and $d_0 \times d_1$ is a fibration). 
\end{definition} 

\begin{remark} 
The definition of functorial path data is adapted from Fresse \cite[Fact 5.3]{fresse}. As a particular example, Fresse showed that functorial path data exists if $\calm$ is the category of chain complexes over a ring of characteristic $0$ or the category of simplicial modules. 
\end{remark} 

One way to check if $\calm$ admits functorial path data is to check if $\calm$ admits an interval object defined as follows.  
 
\begin{definition} 
We say that $\calm$ admits a \textbf{cocommutative, coassociative coalgebra interval} $J$ if the fold map $\tensorunit \sqcup \tensorunit \rightarrow \tensorunit$ can be factored as
\[
\nicexy{ \tensorunit\sqcup \tensorunit\ar[r]^{\alpha} & J \ar[r]^{\beta} &\tensorunit}
\] in which $\alpha$ is a cofibration, $\beta$ is a weak equivalence, $J$ is a coassociative cocommutative comonoid in $\calm$, and $\alpha$ and $\beta$ are both maps of comonoids.
\end{definition}

For example, the categories of compactly generated spaces and simplicial sets admit such cocommutative coalgebra intervals. The category of unbounded chain complexes over a ring which is \emph{not} characteristic $0$ admits an interval which is coassociative, but not cocommutative. 

\begin{lemma}\emph{\cite[3.10]{jy1}} If $\calm$ admits a coassociative, cocommutative coalgebra interval and $\tensorunit$ is cofibrant, then $\calm$ admits functorial path data. 
\end{lemma}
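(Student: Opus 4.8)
The plan is to take $\mathrm{Path}(X) = \Hom(J,X)$, the internal hom object out of the coalgebra interval, and to extract every piece of structure either from the comonoid structure on $J$ or from the closed monoidal model structure on $\calm$. Before anything else I would record that $J$ is cofibrant: since $\tensorunit$ is cofibrant so is $\tensorunit \sqcup \tensorunit$, and $\alpha\colon \tensorunit\sqcup\tensorunit \to J$ is a cofibration, so $J$ is cofibrant. Let $e_0, e_1\colon \tensorunit \to J$ denote the two composites of $\alpha$ with the coproduct inclusions; since $\beta\alpha$ is the fold map, $\beta e_0 = \beta e_1 = \id_{\tensorunit}$, and since $\alpha$ is a morphism of comonoids, each $e_i$ is a morphism of comonoids.

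For the monoidal structure I would use the comultiplication of $J$ together with the lax monoidal structure of the internal hom to build the natural maps
\[
\Hom(J,X) \otimes \Hom(J,Y) \longrightarrow \Hom(J \otimes J,\, X \otimes Y) \longrightarrow \Hom(J,\, X\otimes Y),
\]
and use the counit $\beta\colon J \to \tensorunit$ to produce the unit $\tensorunit \cong \Hom(\tensorunit,\tensorunit) \to \Hom(J,\tensorunit)$. Coassociativity of $J$ gives the associativity coherence for $\mathrm{Path}$ and cocommutativity gives the symmetry coherence, so $\mathrm{Path}$ is a symmetric monoidal functor in the (lax) sense used in the paper. I would then set $s := \Hom(\beta,-)$, using the natural isomorphism $\Hom(\tensorunit,-)\cong\Id$, as a transformation $\Id \Rightarrow \mathrm{Path}$, and $d_i := \Hom(e_i,-)\colon \mathrm{Path} \Rightarrow \Hom(\tensorunit,-)\cong\Id$ for $i=0,1$; these are monoidal natural transformations precisely because $\beta$ and the endpoint maps $e_i$ are morphisms of comonoids. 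Verifying these monoidal compatibilities is the bookkeeping heart of the argument: it is a sequence of routine diagram chases, but it is exactly where the coassociativity and cocommutativity hypotheses on $J$ are consumed, and I expect it to be the most laborious (though not the most subtle) step.

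It remains to check the path-object condition for fibrant $X$. Because $\calm$ is a closed monoidal model category, its pushout--product axiom dualizes to the statement that for a cofibration $i$ and fibrant $X$ the map $\Hom(i,X)$ is a fibration, acyclic whenever $i$ is; applied to the cofibration $\alpha$ this shows that
\[
d_0\times d_1 \;=\; \Hom(\alpha,X)\colon \Hom(J,X) \longrightarrow \Hom(\tensorunit\sqcup\tensorunit,X)\cong X\times X
\]
is a fibration. For $s$: since $\beta\colon J\to\tensorunit$ is a weak equivalence between cofibrant objects and $\Hom(-,X)$ carries trivial cofibrations to (acyclic) fibrations when $X$ is fibrant, the contravariant form of Ken Brown's lemma shows that $\Hom(-,X)$ carries all weak equivalences between cofibrant objects to weak equivalences, so $s = \Hom(\beta,X)$ is a weak equivalence. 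Hence $X \xrightarrow{\,s\,} \mathrm{Path}(X) \xrightarrow{\,d_0\times d_1\,} X\times X$ is a path object, and $(\mathrm{Path}, s, d_0, d_1)$ is functorial path data. Thus the only content-bearing obstacle is the monoidal bookkeeping of the second paragraph; the homotopical assertions are direct appeals to the monoidal model axioms and to Ken Brown's lemma.
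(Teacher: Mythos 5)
Your construction $\mathrm{Path}(X)=\Hom(J,X)$, with the lax symmetric monoidal structure induced by the comultiplication and counit of $J$, $s=\Hom(\beta,-)$, $d_i=\Hom(e_i,-)$, the fibration claim via the adjoint form of the pushout--product axiom applied to $\alpha$, and the weak-equivalence claim via the contravariant Ken Brown argument applied to $\beta$, is correct and is precisely the standard argument behind the cited result (the paper itself gives no proof, deferring to \cite[3.10]{jy1}, which proceeds in exactly this way).
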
 

\begin{definition}
A \textbf{symmetric monoidal fibrant replacement functor} is a functor 
$f: \calm \to \calm$
together with a natural transformation $r: \Id \Rightarrow f$ such that
\begin{itemize}
		\item $r_X : X \to f(X)$ is a fibrant replacement for each object $X$,
	\item $f$ is a symmetric monoidal functor, and
	\item for every $X$ and $Y$ in $\calm$ the following diagram commutes
\[\nicexy{ X\otimes Y \ar[d]^-{r_X\otimes r_Y}\ar[r]^-{r_{X\otimes Y}} & f(X\otimes Y)\\
fX\otimes fY. \ar[ur] &}\]
\end{itemize}

\end{definition}

Throughout this paper, we will want our monoidal model category $\calm$ to satisfy a number of conditions, as we want $\calm$ to have a symmetric monoidal fibrant replacement functor. To simplify the listing of these conditions, we make the following definition. 

\begin{definition} 
A monoidal model category $\calm$ is called \textbf{nice} if 
\begin{itemize}

\item $\calm$ is strongly cofibrantly generated, ie the domain of each generating (acyclic) cofibration is small with respect to the entire category;

\item
there is a symmetric monoidal fibrant replacement functor;
\item
there is functorial path data;
\item
every object is cofibrant;
\item
weak equivalences are closed under filtered colimits.
\end{itemize}
\end{definition} 

Examples of nice monoidal model categories are $\sset$, $\mathbb{Z}$-graded chain complexes in characteristic zero, and simplicial presheaves.

\begin{remark} 
The definition of a nice monoidal model category automatically implies that our monoidal model categories are what are called ``strongly h-monoidal'' in Batanin-Berger~\cite[1.8; 2.5]{bb14} and that our monoidal model categories satisfy the monoid axiom of Schwede and Shipley \cite[3.3]{ss} which also makes an appearance in the work of Muro~\cite{mur14}. 
\end{remark}

The following is a restricted version of Theorem 2.1 \cite{bm07} and is a colored operad analogue of \cite{jy1} (3.11), which dealt with the more complicated case of colored props.

\begin{theorem}
\label{oalg-model}
Suppose $\calm$ is a nice monoidal model category and that  $\sO$ is a $\fC$-colored operad in $\calm$.  
Then $\alg(\sO)$ admits a strongly cofibrantly generated model category structure, in which:
\begin{itemize}
\item
fibrations and weak equivalences are created in $\calmc$, and
\item
the set of generating (acyclic) cofibrations is $\sO \comp \sI$ (resp., $\sO \comp \sJ$), where $\sI$ (resp., $\sJ$) is the set of generating (acyclic) cofibrations in $\calmc$.
\end{itemize}
\end{theorem}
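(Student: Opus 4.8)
The plan is to construct the model structure by transfer along the free--forgetful adjunction
\[
\nicexy{
\calmc \ar@<2pt>[r]^-{\sO \comp -}
& \alg(\sO) \ar@<2pt>[l]^-{U}
}
\]
of \eqref{free-algebra-adjoint}: declare $f$ in $\alg(\sO)$ to be a fibration (resp.\ weak equivalence) precisely when $Uf$ is one in $\calmc$. Since $\alg(\sO)$ is bicomplete by Proposition \ref{algebra-bicomplete}, the standard recognition (transfer) criterion for cofibrantly generated model categories (see \cite{hirschhorn}, and \cite{bm07} for exactly this situation) reduces the theorem to two statements: \textbf{(i)} the sets $\sO \comp \sI$ and $\sO \comp \sJ$ permit the small object argument, and \textbf{(ii)} every relative $(\sO \comp \sJ)$-cell complex is carried by $U$ to a weak equivalence in $\calmc$. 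Once these hold, the transferred structure is cofibrantly generated with generating (acyclic) cofibrations $\sO \comp \sI$ (resp.\ $\sO \comp \sJ$) and with $U$ creating fibrations and weak equivalences, which is the content of the theorem; the one remaining formality, that every $(\sO \comp \sJ)$-cofibration is an $(\sO \comp \sI)$-cofibration, is immediate because acyclic fibrations are fibrations.

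For \textbf{(i)}: $\calm$, and hence $\calmc = \prod_{\fC}\calm$, is strongly cofibrantly generated, so the domains of the maps in $\sI$ and $\sJ$ are small with respect to all of $\calmc$. Because $\sO \comp -$ is a left adjoint and, by Proposition \ref{algebra-bicomplete}, $U$ preserves filtered colimits, the usual adjunction computation shows that the domains of $\sO \comp \sI$ and $\sO \comp \sJ$ are small with respect to all of $\alg(\sO)$. Hence the small object argument applies and $\alg(\sO)$ is strongly cofibrantly generated.

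For \textbf{(ii)}: weak equivalences in $\calm$, hence in $\calmc$, are closed under filtered colimits (part of $\calm$ being nice), and $U$ preserves filtered colimits, so it suffices to handle one elementary step, namely a pushout
\[
\nicexy{
\sO \comp K \ar[r] \ar[d]_-{\sO \comp j} & A \ar[d] \\
\sO \comp L \ar[r] & B
}
\]
in $\alg(\sO)$ with $j : K \to L$ in $\sJ$, and to show that $U(A \to B)$ is a weak equivalence. For this one uses the standard presentation of $UB$ as a sequential colimit $UA = P_0 \to P_1 \to \cdots$ in $\calmc$ in which each $P_{r-1} \to P_r$ is a pushout, taken in $\calmc$, of a map assembled from the arity-$r$ part of $\sO$, tensor powers of the underlying colored objects of $A$, and the $r$-th pushout-product power of $j$, all quotiented by the appropriate $\Sigma_r$-actions; this description rests on the fact (Proposition \ref{algebra-bicomplete}) that reflexive coequalizers in $\alg(\sO)$ are created by $U$. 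Because $\calm$ is nice --- in particular every object is cofibrant and $\calm$ is strongly h-monoidal in the sense of \cite{bb14}, equivalently satisfies the monoid axiom of \cite{ss} (see the remark after the definition of \emph{nice}) --- forming $\Sigma_r$-orbits of these maps produces weak equivalences even though the entries of $\sO$ need not be $\Sigma$-cofibrant; hence each $P_{r-1} \to P_r$, and therefore $UA \to UB$, is a weak equivalence. This argument is the colored-operad analogue of \cite[Theorem 2.1]{bm07} and of \cite[3.11]{jy1}, the difference being only that the hypotheses those references impose on $\calm$ have here been bundled into the single notion of a nice monoidal model category; the combinatorics of the filtration are identical.

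The main obstacle is step \textbf{(ii)}, and within it the verification that the filtration maps $P_{r-1} \to P_r$ are weak equivalences for an \emph{arbitrary} colored operad $\sO$. With no $\Sigma$-cofibrancy available, this is exactly where the full force of ``nice'' --- every object cofibrant, the monoid axiom / strong h-monoidality, and closure of weak equivalences under filtered colimits --- is needed, in order to commute $\Sigma_r$-orbits past weak equivalences; the remainder of the proof is the formal transfer machinery.
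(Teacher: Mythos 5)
The paper does not prove this theorem by a filtration; it defers to \cite[Theorem 2.1]{bm07} and \cite[3.11]{jy1}, whose method is Quillen's path-object argument: the symmetric monoidal fibrant replacement functor induces a fibrant replacement on $\sO$-algebras, the functorial path data (being symmetric monoidal) induces path objects for fibrant $\sO$-algebras, and the transfer then exists by the standard recognition lemma of Schwede--Shipley/Berger--Moerdijk. This is precisely why those two items are built into the definition of \emph{nice}. Your proposal never uses either of them, which is a warning sign: you are trying to prove the theorem from a strictly weaker set of hypotheses (bicompleteness, smallness, every object cofibrant, monoid axiom/h-monoidality, filtered colimits of weak equivalences), and for \emph{symmetric} operads that is not known to suffice.

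The concrete gap is in your step \textbf{(ii)}. The filtration maps $P_{r-1}\to P_r$ are pushouts of maps of the form $\sO_A[r]\otimes_{\Sigma_r}\bigl(Q^r_{r-1}\to L^{\otimes r}\bigr)$, where $\sO_A$ is the enveloping operad and $j^{\boxprod r}:Q^r_{r-1}\to L^{\otimes r}$ is a $\Sigma_r$-equivariant map whose underlying map is an acyclic cofibration. The monoid axiom (equivalently, strong h-monoidality) controls $Z\otimes j$ for an arbitrary object $Z$ and acyclic cofibration $j$; it says nothing about the coinvariants $Z\otimes_{\Sigma_r}(-)$ of an equivariant map that is only an \emph{underlying} acyclic cofibration. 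When $\sO$ is not $\Sigma$-cofibrant, $\sO_A[r]\otimes_{\Sigma_r}(-)$ is not a left Quillen functor for the projective $\Sigma_r$-structure, and quotienting by a group action does not preserve weak equivalences in general (compare $E\Sigma_r\to *$ versus $B\Sigma_r\to *$). This is exactly the obstruction that makes the free symmetric operad monad fail to be ``tame'' in the sense of \cite{bb14}, and it is the step your argument asserts without justification (``forming $\Sigma_r$-orbits of these maps produces weak equivalences\dots''). The filtration-plus-monoid-axiom argument does establish the theorem for non-symmetric operads and for $\Sigma$-cofibrant symmetric operads, but for an arbitrary $\fC$-colored symmetric operad you need a different mechanism --- either the path-object argument the paper intends, or category-specific input as in Batanin--Berger. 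Steps \textbf{(i)} and the reduction to single pushouts via filtered colimits are fine.
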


\begin{example} The category of simplicial sets, $\sset$, is a Cartesian closed, cofibrantly generated, monoidal model category that admits a coassociative, cocommutative interval. As a symmetric monoidal fibrant replacement functor, we can choose either the $Ex^{\infty}$ functor or the singular chain complex of the geometric realization functor, since both are product-preserving. Similarly, the category of $\mathbb{Z}$-graded chain complexes over a field $\mathbb{K}$ with the projective model structure \cite[Chapter 2]{hovey} satisfies the conditions of Theorem \ref{oalg-model}.
\end{example} 

\begin{corollary}\label{oalg-model-operad} If $\calm$ is a nice monoidal model category, then $\Alg(\opc) \cong \operadsigmac$ admits a cofibrantly generated model structure. 
\end{corollary}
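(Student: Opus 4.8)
The plan is to deduce the corollary directly from Theorem \ref{oalg-model} by specializing the operad. Recall from Example \ref{ex:operad-of-operad} that there is an $\nofc$-colored operad $\opc$ in $\calm$ together with an isomorphism of categories $\operadsigmac \cong \Alg(\opc)$. Since $\calm$ is assumed to be a nice monoidal model category, Theorem \ref{oalg-model} applies to \emph{any} colored operad in $\calm$, in particular to the operad $\opc$ over the color set $\nofc$. Thus $\Alg(\opc)$ admits a strongly cofibrantly generated model category structure whose fibrations and weak equivalences are created in $\calm^{\nofc}$ and whose generating (acyclic) cofibrations are $\opc \comp \sI$ (resp.\ $\opc \comp \sJ$). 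Transporting this structure across the isomorphism $\Alg(\opc) \cong \operadsigmac$ yields the desired cofibrantly generated model structure on $\operadsigmac$.

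The one hypothesis of Theorem \ref{oalg-model} that warrants a remark is that $\opc$ must genuinely be a $\fC'$-colored operad in $\calm$ for $\fC' = \nofc$; this is exactly the content of Example \ref{ex:operad-of-operad}, where $\opc$ is constructed as the entrywise image of the set-level operad $\opcset$ under the strong symmetric monoidal functor $\set \to \calm$ of \eqref{set to calm}. No further cofibrancy or smallness conditions on $\opc$ are needed as \emph{input} to Theorem \ref{oalg-model}: the theorem is stated for arbitrary operads in a nice $\calm$, so one simply invokes it. (We do record in Example \ref{ex:operad-of-operad} that when $\tensorunit$ is cofibrant the underlying symmetric sequence of $\opc$ is cofibrant, but that observation is for later use, not for the present corollary.)

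So the proof is essentially a two-line argument: apply Theorem \ref{oalg-model} with $\sO = \opc$ and the color set $\nofc$ in place of $\fC$, then transport along the isomorphism of categories from \eqref{colored-op-algebras}. The only potential subtlety — and the place one should be slightly careful — is to confirm that a nice monoidal model category remains the ambient category when we change the color set from $\fC$ to $\nofc$; but ``nice'' is a property of $\calm$ alone and is independent of the choice of color set, so there is nothing to check. Hence no real obstacle arises, and the corollary follows immediately.
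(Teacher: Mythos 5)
Your proposal is correct and is exactly the argument the paper intends: the corollary is an immediate application of Theorem \ref{oalg-model} to the $\nofc$-colored operad $\opc$ of Example \ref{ex:operad-of-operad}, transported along the isomorphism $\Alg(\opc)\cong\operadsigmac$ of \eqref{colored-op-algebras}. Your remark that ``niceness'' is a property of $\calm$ alone, independent of the color set, correctly addresses the only point that could conceivably need checking.
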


\begin{definition}\label{def:sigma-cof}
\begin{itemize}

\item The \textbf{fibrant} $\mathfrak{C}$-colored operads are those which are locally fibrant, ie $\sP\binom{d}{\uc}$ is fibrant in $\calm$ for all profiles $(\uc; d).$ 

\item A $\mathfrak{C}$-colored operad is called \textbf{$\Sigma$-cofibrant} if $\sP$ is cofibrant as an object in $\symseqcm = \calm^{\sigmacopc}.$
\end{itemize}

\end{definition}

Every cofibrant operad is, in particular, $\Sigma$-cofibrant~\cite[Proposition 4.3]{bm06}.

\begin{example}
	The associative operad $\mathbb{A}$ is the prototypical $\Sigma$-cofibrant operad which is not cofibrant.
	In $\sset$, the commutative operad $\mathbb{C}$ is neither $\Sigma$-cofibrant nor cofibrant.
\end{example}

\section{Relative Left Properness of Operads with Fixed Colors}

In this section, we show that the model category structure of Corollary~\ref{oalg-model-operad} satisfies a property close to that of left properness, which we will refer to as \textbf{relative} left properness.

\begin{definition}\label{relative definition}
The model category $\operadsigmac$ is called left proper \textbf{relative to the class of $\Sigma$-cofibrant operads} if pushouts by cofibrations preserve weak equivalences whose domain and codomain are $\Sigma$-cofibrant.   
\end{definition}

\subsection{The Pushout Filtration}

Relative left properness of $\operadsigmac$ comes down to a study of pushouts of $\fC$-colored operads where one of the defining maps is a free morphism of free operads (Lemma \ref{opc-jt}). To perform this analysis, we make use of the language of colored, planar trees such as those in \cite[5.8]{bm03}, \cite{gk94} or \cite[Section 3]{bm06}. The following definition comes from Chapter 3~\cite{yau_op}. 

\begin{definition}A \textbf{rooted $n$-tree} is a non-empty, finite, connected, directed graph with no directed cycles in which:  

\begin{enumerate} 

\item there are $n$ distinguished vertices, called inputs, each with exactly one out-going edge and no incoming edges;

\item there is a distinguished vertex that is not an input, called the root, with exactly one incoming edge and no outgoing edges;

\item each vertex away from the set of inputs and the root has exactly one outgoing edge.

\end{enumerate} 

A \textbf{planar rooted tree} is a rooted tree in which the set of incoming edges at each vertex is equipped with a linear ordering.

\end{definition}

\begin{remark} 
For a planar rooted tree $T$, we write $in(T)$ for the set of its input edges. Since $T$ is planar, the input edges (or leaves) have a linear order and we write $\lambda(T)$ for the set of all such orderings \[\nicexy{\{1,...,n\}\ar[r]& in(T)}\] where $n=|in(T)|$. It is fairly easy to check that one can identify the set of all linear orderings of the input edges of $T$, $\lambda(T)$, with the group of permutations $\Sigma_{n}$. 
\end{remark} 

\begin{definition} Let $A\in\symseqcm$ (\ref{def:colored-sigma-object}) and suppose $m \geq 1$, and $t \in \nofc$, $s_j \in \nofc$ for $1 \leq j \leq m$.
\begin{enumerate}
\item
Denote by $\tree(t)$ the groupoid of directed, planar, rooted, $\fC$-colored trees in which the input-output profile is given by $t$. The morphisms in $\tree(t)$ are non-planar isomorphisms of $\fC$-colored trees.

\item
Denote by $\tree(\{s_j\}_1^m;t)$ the groupoid of pairs $(T,\sds)$ such that
\begin{itemize}
\item
$T \in \tree(t)$, and 
\item
$\sds \subseteq \vertex(T)$ such that the set of vertex profiles in $\sds$ is the set $\{s_j\}_1^m$.
\end{itemize}
Vertices in $\sds$ are called \textbf{distinguished vertices}.  Vertices in the complement
\[
\sn(T) \equiv \vertex(T) \setminus \sds
\]
are called \textbf{normal vertices}. Isomorphisms of $\tree(\{s_j\};t)$ are isomorphisms of $\fC$-colored trees which preserve the distinguished vertices and colorings of edges. 

\item 
A pair $(T,\sds)\in\tree(\{s_j\};t)$ is said to be \textbf{well-marked} if every flag of a distinguished vertex is part of an internal edge whose other end vertex is normal.

\item
A pair $(T,\sds) \in \tree(\{s_j\};t)$ is said to be \textbf{reduced} if it is well-marked and there are no adjacent normal vertices, ie every vertex adjacent to a normal vertex is distinguished.  The groupoid of such reduced trees is denoted by $\reducedtree(\{s_j\};t)$.

\item
Given a vertex $u$ in a tree $T$, write $A(u)$ for the component of the symmetric sequence $A$ corresponding to the profiles of $u$.  In other words, if the profiles of $u$ are $(\uc;d) \in \nofc$, then $A(u) = A\duc$.  We also say that $A(u)$ is a \textbf{decoration} of $u$ by $A$ and that $u$ is \textbf{$A$-decorated}.  A tree with each vertex decorated by $A$ is said to be \textbf{$A$-decorated}.

\end{enumerate}
\end{definition}

\begin{definition}\label{aut-to-sigma}
Suppose that $f: H\to G$ is a homomorphism of groups.
Then there is an adjoint pair
\[
	(-) \cdot_H G \,\,\colon\,\, \calm^{H^{op}} \rightleftarrows \calm^{G^{op}} \,\,\colon\,\, f^*;
\]
this adjoint pair is actually a Quillen adjunction \cite[2.5.1]{bm06}.
If $f$ is a subgroup inclusion and $X\in \calm$ is an object with a right $H$ action (ie $X\in \calm^{H^{op}}$), we have
\[
	X \cdot_H G \cong \coprod_{G/H} X
\]
where the coproduct is indexed over the cosets of $H$ in $G$.
\end{definition}

The following definition appears in \cite{harper-jpaa} (7.10).

\begin{definition}[$Q$-Construction]
\label{one-colored-q}
Suppose there is a map $i : X \to Y \in \calm$. 
The object $Q^{t}_q \in \calm^{\Sigma_t}$ is given as follows.
\begin{itemize}
\item
$Q^{t}_0 = X^{\otimes t}$.
\item
$Q^{t}_t = Y^{\otimes t}$.
\item
For $0 < q < t$ there is a pushout in $\calm^{\Sigma_t}$:
\begin{equation}
\label{inductive-q-one-colored}
\nicexy{
\left[ X^{\otimes (t-q)} 
\otimes Q_{q-1}^{q}\right] 
\cdot_{\Sigma_{t-q} \times \Sigma_q} \Sigma_t
\ar[d]_-{(\id,i_*)} \ar[r] \pushout
&
Q^{t}_{q-1} \ar[d]
\\
\left[ X^{\otimes (t-q)} 
\otimes 
Y^{\otimes q}\right]
\cdot_{\Sigma_{t-q} \times \Sigma_q} \Sigma_t 
\ar[r] 
&
Q^{t}_q.
}
\end{equation}
\end{itemize}
\end{definition}

\begin{lemma}
\label{opc-jt}
For $A \in \operadsigmac$ and a map $i : X \to Y$ in $\calm$, regarded as a map in $\calm^{\nofc}$ concentrated in the $s$-entry for some $s \in \nofc$, consider a pushout
\[
\nicexy{
\opc \comp X \ar[d]_{i_*} \ar[r]^-{f} \pushout
& A \ar[d]^-{h}
\\
\opc \comp Y \ar[r]
& A_{\infty}
}\]
in $\operadsigmac$. Then for a fixed orbit $[r]$, with $r \in \nofc$, the $[r]$-entry of the map $h$ is a countable composition
\[
\nicexy{
A([r]) = A_0([r]) \ar[r]^-{h_1}
& A_1([r]) \ar[r]^-{h_2}
& A_2([r]) \ar[r]^-{h_3}
& \cdots \ar[r]
& A_\infty([r]),
}\]
where for $k \geq 1$ the $h_k$ are inductively defined as the following pushout in $\calm^{\Sigma_{[r]}}$
\begin{equation}
\label{opc-jt-pushout}
\nicexy@R+10pt{
\coprod_{[T,\sds]}
\left\{\left[ \bigotimes_{u \in \sn(T)} A(u)\right] 
\otimes Q^k_{k-1}\right\} 
\cdot_{Aut(T,\sds)} \Sigma_{[r]}
\ar[d]_-{\amalg (\id \otimes i^{\boxprod k}) \otimes_{Aut(T,\sds)} \id} 
\ar[r]^-{f^{k-1}_*} \pushout
& 
A_{k-1}([r]) \ar[d]^-{h_k}
\\
\coprod_{[T,\sds]}
\Bigl\{
\underbrace{
\left[ \textstyle{\bigotimes_{u \in \sn(T)} A(u)}\right] 
\otimes Y^{\otimes k}}_{\text{normal/dist. vertex decorations}}
\Bigr\}
\cdot_{Aut(T,\sds)} 
\underbrace{\Sigma_{[r]}}_{\text{input labelling}}
\ar[r]_-{\xi_{k}} 
& 
A_k([r]).
}
\end{equation}
In this pushout:
\begin{enumerate}
\item
The top horizontal map $f^{k-1}_*$ is induced by $f$ and the operad structure map of $A$.
\item
Each coproduct on the left is indexed by the set of weak isomorphism classes of reduced trees $(T,\sds)$ such that:
\begin{itemize}
\item
the input profile of $T$ is in the orbit $[r]$, and 
\item
$\sds$ consists of $k$ distinguished vertices, all with profiles in the orbit $[s]$.
\end{itemize}
\end{enumerate}
\end{lemma}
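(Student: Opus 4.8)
The plan is to analyze the pushout
\[
\nicexy{
\opc \comp X \ar[d]_{i_*} \ar[r]^-{f} \pushout
& A \ar[d]^-{h}
\\
\opc \comp Y \ar[r]
& A_{\infty}
}
\]
by importing, colorwise, the standard filtration of a pushout along a free map in a category of algebras over the colored operad $\opc$. Recall from Example \ref{ex:operad-of-operad} that $\operadsigmac \cong \alg(\opc)$, so $A_\infty$ is computed as $\alg(\opc)$-pushout along the free map $\opc \comp X \to \opc \comp Y$ with $i \colon X \to Y$ concentrated in the $s$-entry of $\calm^{\nofc}$. The first step is therefore to write down the usual Boardman--Vogt/Harper-style filtration of such a pushout (cf.\ \cite{harper-jpaa} (7.10)): the map $h$ is an $\omega$-sequential composite $A = A_0 \to A_1 \to A_2 \to \cdots \to A_\infty$, and each $A_{k-1} \to A_k$ fits into a pushout whose left vertical map is built from the $Q$-construction $Q^k_{k-1} \to Y^{\otimes k}$ applied to $i$. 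The content of the lemma is the identification of the indexing data of that pushout in terms of $\fC$-colored trees.

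Next I would carry out the combinatorial identification. A generic element of the free operad $\opc \comp A$ evaluated at an orbit $[r]$ is a $\fC$-colored rooted planar tree $T$ with input profile in $[r]$, each of whose vertices is decorated by $A$, modulo non-planar isomorphism and modulo the input relabelling action of $\Sigma_{[r]}$; this is precisely the tree description of free $\fC$-colored operads referenced after Example \ref{ex:operad-of-operad}. Pushing out along $\opc\comp X \to \opc\comp Y$ adjoins, at each filtration stage $k$, trees carrying exactly $k$ ``new'' vertices decorated by $Y$ with profile in the orbit $[s]$ — these are the distinguished vertices of $\sds$ — while the remaining (normal) vertices stay decorated by $A$. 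The reduction steps \emph{well-marked} and \emph{reduced} of Definition~(tree defn) are exactly the normal forms that eliminate redundancy: adjacent normal vertices can be composed using the operad structure map of $A$ (this is where the map $f^{k-1}_*$ on the top of \eqref{opc-jt-pushout} comes from), and a distinguished flag adjacent to another distinguished vertex would already have been produced at an earlier stage. So the colimit over the orbit groupoid $\reducedtree(\{s\}^k;t)$ (with $t$ in $[r]$) of $\bigl[\bigotimes_{u\in\sn(T)} A(u)\bigr]\otimes Y^{\otimes k}$, inflated along $\Aut(T,\sds)\le \Sigma_{[r]}$ via the induction $(-)\cdot_{\Aut(T,\sds)}\Sigma_{[r]}$ of Definition~\ref{aut-to-sigma}, computes $A_k([r])$; and the sub-object where at least one of the $k$ new decorations is still $X$ rather than $Y$ assembles, via the inductive pushout squares \eqref{inductive-q-one-colored} defining the $Q$-construction, into the domain $\bigl[\bigotimes_{u\in\sn(T)}A(u)\bigr]\otimes Q^k_{k-1}$ of the left vertical map. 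This yields the pushout \eqref{opc-jt-pushout}, with the two coproducts indexed as claimed by weak isomorphism classes of reduced trees with $k$ distinguished vertices all of profile $[s]$.

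The main obstacle I expect is bookkeeping the equivariance correctly: one must check that passing to weak isomorphism classes $[T,\sds]$ (non-planar isomorphisms preserving $\sds$ and edge-colorings) and then inducing along $\Aut(T,\sds) \hookrightarrow \Sigma_{[r]}$ genuinely recovers the $\Sigma_{[r]}$-object structure of each $A_k([r])$, i.e.\ that the planar-tree presentation of the free operad descends to the symmetric one without overcounting or introducing spurious automorphisms. Concretely, the inputs of $T$ are linearly ordered (via $\lambda(T)\cong\Sigma_n$ from the planar structure), a non-planar automorphism of $(T,\sds)$ permutes them, and $\Aut(T,\sds)$ is exactly the stabilizer in $\Sigma_{[r]}$ of that input-ordering together with the decorations by the $Y$'s; so the induced object $\bigl(\cdots\bigr)\cdot_{\Aut(T,\sds)}\Sigma_{[r]}\cong \coprod_{\Sigma_{[r]}/\Aut(T,\sds)}(\cdots)$ is the correct $\Sigma_{[r]}$-orbit summand. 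A second, more routine point is commuting the colimit defining $A_\infty$ past the colorwise evaluation — but this is immediate since colimits in $\operadsigmac \cong \alg(\opc)$, hence in particular these transfinite compositions and pushouts, are computed in the underlying $\calm^{\nofc}$ by Proposition~\ref{algebra-bicomplete}. The rest is the standard argument that the $k$-th filtration stage adds precisely the trees with $k$ distinguished vertices, which follows by induction on $k$ from the structure of the free-operad monad and the $Q$-construction.
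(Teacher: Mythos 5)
Your overall strategy is sound and is essentially the route of White--Yau \cite{yw15}, Prop.\ 4.3.16, which the paper itself cites as the source of this lemma: invoke the general filtration of a pushout of $\calo$-algebras along a free map $\calo\comp X\to\calo\comp Y$ (Harper-style, built from the $Q$-construction), and then identify the $k$-th term of the enveloping operad of $A$ as an $\opc$-algebra with the coproduct over reduced trees carrying $k$ distinguished vertices. The paper's own sketch runs in the opposite direction: it takes the squares \eqref{opc-jt-pushout} as the \emph{definition} of a filtration, sets $B([r])=\colim_k A_k([r])$, equips $B$ with an operad structure by hand (units from $A$; $\circ_i$-compositions by grafting reduced trees and using $A$'s structure maps to re-reduce; equivariance from the input-relabelling factors), constructs $A\to B$ from $A_0\to B$ and $Y\to B$ from $\xi_1$ on the $s$-corolla, and checks that $B$ has the universal property of $A_\infty$. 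Your route front-loads the general machinery and reduces the lemma to a computation of the enveloping operad; the paper's route avoids citing the filtration theorem but must instead verify the ad hoc operad structure on $B$ and the universal property directly. Either is acceptable at the level of a sketch.

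Three points in your write-up are stated incorrectly and should be repaired. First, Proposition \ref{algebra-bicomplete} does \emph{not} say that colimits in $\alg(\opc)$ are created in $\calm^{\nofc}$ --- only reflexive coequalizers and filtered colimits are. The pushout defining $A_\infty$ is certainly not computed entrywise (otherwise the lemma would be vacuous); only the sequential colimit $\colim_k A_k([r])$ may be passed to the underlying category. Second, the homomorphism $\Aut(T,\sds)\to\Sigma_{[r]}$ recording the action on inputs need not be injective: since $\opc$ has nontrivial $0$-ary components, a reduced tree can have automorphisms permuting leafless subtrees while fixing every input. So $\Aut(T,\sds)$ is not literally ``the stabilizer in $\Sigma_{[r]}$,'' and the coset decomposition $(-)\cdot_{\Aut(T,\sds)}\Sigma_{[r]}\cong\coprod_{\Sigma_{[r]}/\Aut(T,\sds)}(-)$ of Definition \ref{aut-to-sigma} applies only to subgroup inclusions; one must use the general induction along a homomorphism. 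Third, configurations with two adjacent distinguished vertices (or a distinguished flag on a leaf or the root) are excluded not because they ``were produced at an earlier stage'' --- they still have $k$ distinguished vertices --- but because they are identified with reduced configurations by inserting unit-decorated normal vertices: the well-marked, reduced trees are normal forms for the reflexive coequalizer computing the enveloping operad, and that identification uses the units of $A$. None of these affects the viability of your approach, but as written they would not survive refereeing.
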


\begin{proof} This theorem is a special case of Prop 4.3.16 in \cite{yw15} by taking $\mathsf{O}=\opc$; we sketch the proof.
For each $r \in \nofc$, \emph{define}
\[
B([r]) = \colim_k A_k([r]).
\]
Then $B$ has a canonical $\fC$-colored operad structure given as follows.
\begin{itemize}
\item
Its colored units are those of $A$; ie $\tensorunit \to A\ccsingle \to B\ccsingle$ for each $c\in\fC$.

\item
The operadic $\circ_i$ compositions are given by grafting of reduced trees, where the colored operad structure of $A$ is used to bring the grafted tree to a reduced one if necessary.

\item
Its equivariant structure is given by the factors $\Sigma_{|in(T)|}$.

\end{itemize}

The operad map $A \to B$ is induced by $A_0 \to B$.  The map $Y \to B$ is induced by $\xi_1$ (for the $s$-corolla whose only vertex is distinguished) and $A_1 \to B$.  That $B$ is the pushout $A_\infty$ follows from its inductive definition.
\end{proof}

For any finite group $G$, the category of $G$-objects, $\calm^{G}$, has a natural structure of a cofibrantly generated model category where weak equivalences and fibrations are defined entrywise, as described in \cite{hirschhorn} (11.1.10).  
In this model category a generating (acyclic) cofibration is a $G$-equivariant (acyclic) cofibration in the category of $\calm$-objects with $G$-action.  
Because it will be important to keep track of which group we are working with, we will denoted these sets of generating cofibrations by $\sI[G]$ and generating acyclic cofibrations by $\sJ[G]$.

The following lemma, due to Berger-Moerdijk~\cite[Lemma 5.10]{bm03} and Spitzweck~\cite[Lemma 4]{spitzweck-thesis}, gives an equivariant version of the pushout product axiom.

\begin{lemma}\label{prop:equivariant-pushout-product} 
Let $G$ and $\Gamma$ be finite groups with $\Gamma$ acting from the right on $G$. For any $\Gamma$-cofibration $i:X \rightarrow Y$ and any map of right $G\rtimes \Gamma$-objects $A\rightarrow B$ whose underlying map is a cofibration in a nice monoidal model category $\calm$, the induced map $$
X\otimes B\coprod_{X\otimes A} Y\otimes A \longrightarrow Y\otimes B
$$
is a $G\rtimes\Gamma$-cofibration, where $G\rtimes\Gamma$ acts on $Y\otimes B$ by $(y\otimes b)^{(g,\gamma)} = y^{\gamma}\otimes b^{(g,\gamma)}.$
\end{lemma}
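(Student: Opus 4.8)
The plan is to reduce the statement to the classical pushout–product axiom for the monoidal model category $\calm$ by a careful bookkeeping of the two group actions. First I would observe that a right $G\rtimes\Gamma$-object $A$ is the same as an object of $\calm$ equipped with a right $G$-action together with a compatible right $\Gamma$-action (the semidirect product relation $(g,\gamma)=(g,1)(1,\gamma)$ and $(1,\gamma)(g,1)=(g^{\gamma},1)(1,\gamma)$ encoding exactly how $\Gamma$ normalizes $G$). Likewise, since $\Gamma$ acts trivially on nothing here and $i\colon X\to Y$ is only a map of $\Gamma$-objects, I would view $Y\otimes B$ with the stated action $(y\otimes b)^{(g,\gamma)}=y^{\gamma}\otimes b^{(g,\gamma)}$ as well-defined precisely because $G$ acts only on the $B$-factor while $\Gamma$ acts diagonally. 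The point to isolate at the start is that the pushout–product map $X\otimes B\amalg_{X\otimes A}Y\otimes A\to Y\otimes B$ is manifestly a map of right $G\rtimes\Gamma$-objects, so the only content is the cofibration claim.

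Next I would stratify the argument into two stages. Stage one: forget the $\Gamma$-actions entirely and work in $\calm^{G^{\mathrm{op}}}$. Here $i\colon X\to Y$ is just a cofibration in $\calm$ (a cofibration of objects with trivial $G$-action, hence in particular a $G$-cofibration since the free–forgetful/trivial-action adjunction $\calm\rightleftarrows\calm^{G^{\mathrm{op}}}$ is Quillen), and $A\to B$ is a $G$-cofibration by hypothesis. The ordinary monoidal model category structure on $\calm^{G^{\mathrm{op}}}$ — the one recalled in the paragraph preceding the lemma, with generating (acyclic) cofibrations $\sI[G]$, $\sJ[G]$ — satisfies the pushout–product axiom: this follows from the pushout–product axiom in $\calm$ together with the fact that $\otimes$ on $\calm^{G^{\mathrm{op}}}$ is computed by $(X\otimes A)^g = X^g\otimes A^g$, and that $(-)\cdot_H(H)$-type inductions commute with the relevant colimits. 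So the pushout–product map is a $G$-cofibration. Stage two: promote this to a $G\rtimes\Gamma$-cofibration. For this I would use the subgroup inclusion $G\hookrightarrow G\rtimes\Gamma$ and the induced Quillen adjunction $(-)\cdot_G (G\rtimes\Gamma)\colon\calm^{G^{\mathrm{op}}}\rightleftarrows\calm^{(G\rtimes\Gamma)^{\mathrm{op}}}\colon\,\mathrm{res}$ of Definition~\ref{aut-to-sigma}; but $(-)\cdot_G(G\rtimes\Gamma)$ is not quite the functor I want, so instead I would argue directly that an object of $\calm^{(G\rtimes\Gamma)^{\mathrm{op}}}$ is $G\rtimes\Gamma$-cofibrant iff it is $G$-cofibrant \emph{and} the $\Gamma$-action is suitably free relative to $G$ — precisely, iff it is a retract of a cell complex built from cells of the form $(\partial D\to D)\cdot_H(G\rtimes\Gamma)$ with $H$ ranging over subgroups with $H\cap G=1$. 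The cleanest route is: a map in $\calm^{(G\rtimes\Gamma)^{\mathrm{op}}}$ is a cofibration iff its image under restriction along $G\hookrightarrow G\rtimes\Gamma$ is a $G$-cofibration \textbf{and} it is an underlying cofibration — and then check that $Y\otimes B$ and the pushout–product both satisfy the freeness needed.

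Concretely, I would run the following induction on cells, exactly parallel to the standard proof of the pushout–product axiom but carrying the $G\rtimes\Gamma$-equivariance. One factors $A\to B$ as a transfinite composite of pushouts of generating $G\rtimes\Gamma$-cofibrations, i.e. maps of the form $(C\to D)\cdot_H(G\rtimes\Gamma)$ with $C\to D$ a generating cofibration of $\calm$ and $H\le G\rtimes\Gamma$; by the adjunction $(-)\otimes X\dashv \Hom(X,-)$ and the fact that $\otimes$ preserves colimits and inductions, one reduces to checking that for each such cell, the map $X\otimes\bigl((C\to D)\cdot_H(G\rtimes\Gamma)\bigr)$ pushed out appropriately against $i$ is a $G\rtimes\Gamma$-cofibration. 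Using $\bigl(Z\cdot_H(G\rtimes\Gamma)\bigr)\otimes X \cong (Z\otimes X)\cdot_H(G\rtimes\Gamma)$ for the $\Gamma$-object $X$ (here $\Gamma$, hence $H$, acts on $X$ via the projection $G\rtimes\Gamma\to\Gamma$, and $G\le H$ acts trivially), the problem collapses to: the $H$-equivariant pushout–product of $i\colon X\to Y$ with $C\to D$ is an $H$-cofibration, which is exactly the Berger–Moerdijk/Spitzweck equivariant pushout–product for the group $H$ — or, if one wants to bootstrap rather than cite, it follows from the pushout–product axiom in $\calm$ together with the observation that $H\cap G=1$ forces the $H$-action on $X$ to factor through a subgroup of $\Gamma$, so no new freeness obstruction appears on the $X$-side.

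The main obstacle, and where I would spend the most care, is the bookkeeping in Stage two: making precise the claim that an object of $\calm^{(G\rtimes\Gamma)^{\mathrm{op}}}$ built by inducing up cells along subgroups $H$ behaves well under $-\otimes X$ when $X$ carries only a $\Gamma$-action and $G$ acts trivially on $X$ — equivalently, verifying the isomorphism $(Z\cdot_H(G\rtimes\Gamma))\otimes X\cong (Z\otimes \mathrm{res}_H X)\cdot_H(G\rtimes\Gamma)$ is $G\rtimes\Gamma$-equivariant for the twisted action $(y\otimes b)^{(g,\gamma)}=y^{\gamma}\otimes b^{(g,\gamma)}$ stated in the lemma, and that the twisted action is the one induced by this identification. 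This is a projection-formula/Mackey-type computation; it is routine but must be done with the semidirect-product multiplication written out, because a sign-error-analogue here (swapping which factor $\Gamma$ hits) breaks equivariance. Everything else is a mechanical transfinite-induction repackaging of the ordinary pushout–product axiom in $\calm$, which holds since $\calm$ is a (nice) monoidal model category.
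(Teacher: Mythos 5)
The paper does not actually prove this lemma: it is quoted from Berger--Moerdijk \cite[Lemma 5.10]{bm03} and Spitzweck \cite[Lemma 4]{spitzweck-thesis}, so the only thing to compare your argument against is the standard proof in those sources. Measured against that, your proposal has a genuine gap at its foundation: you have inverted which of the two maps carries the equivariant cofibrancy. In Stage one you assert that ``$A\to B$ is a $G$-cofibration by hypothesis''; the hypothesis is only that the \emph{underlying} map of $A\to B$ is a cofibration in $\calm$, which is strictly weaker than being a cofibration in the projective model structure on $\calm^{G^{\mathrm{op}}}$ (whose generating cofibrations are the free cells $\sI[G]$). The entire content of the lemma is that the $\Gamma$-equivariant cofibrancy of $i$, combined with the fact that $G$ acts only through the $A,B$ tensor factor, upgrades a merely underlying cofibration to an equivariant one; if $A\to B$ were already an equivariant cofibration there would be nothing to prove beyond the ordinary pushout--product axiom. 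Relatedly, your parenthetical claim that a cofibration with trivial $G$-action is a $G$-cofibration ``since the trivial-action adjunction is Quillen'' is false: the left Quillen functor $\calm\to\calm^{G^{\mathrm{op}}}$ is the free functor $(-)\cdot G$, not the trivial-action functor, and for instance $\varnothing\to\Delta^0$ with trivial $\mathbb{Z}/2$-action is not a projective cofibration in $\sset^{\mathbb{Z}/2}$ (its cofibrant replacement is $E\mathbb{Z}/2$).

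These two misreadings propagate through the rest of the argument. Your transfinite induction presents $A\to B$ as a relative cell complex of induced cells $(C\to D)\cdot_H(G\rtimes\Gamma)$, a presentation that is simply not available because $A\to B$ is not assumed to be an equivariant cofibration; and your proposed criterion ``a map is a $G\rtimes\Gamma$-cofibration iff its restriction to $G$ is a $G$-cofibration and it is an underlying cofibration'' already fails for $G=1$, where it would assert that every map of $\Gamma$-objects with underlying cofibration is a $\Gamma$-cofibration. The standard reduction runs the cellular induction on the other variable: since $(-)\boxprod(A\to B)$ preserves pushouts, transfinite composites and retracts in the first argument, one may assume $i$ is a generating $\Gamma$-cofibration $\Gamma\cdot(C\to D)$; the projection formula then rewrites the pushout--product as $(-)\cdot_G(G\rtimes\Gamma)$ (the left Quillen functor of Definition~\ref{aut-to-sigma}) applied to the pushout--product of $C\to D$ with the restriction of $A\to B$ to $G$, and it is exactly the freeness supplied by the $\Gamma$-cell that makes this an equivariant cofibration. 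Your final paragraph does gesture at the right projection-formula computation, but it is applied to the wrong variable, so the argument as written does not close.
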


In practice, $\Gamma$ will be the symmetric group acting on the inputs of a tree $T$ in $\reducedtree$. 

\begin{lemma}\label{equivariant-pushout-product}
In the context of Lemma \ref{opc-jt}, suppose:
\begin{itemize}
\item
$\calm$ is a nice monoidal model category;
\item
$i : X \to Y \in \calm$ is a cofibration 
\item $A$ is a $\Sigma$-cofibrant operad. 
\end{itemize}
Then each map
\[
\nicexy{
\left[ \bigotimes_{u \in \sn(T)} A(u)\right] 
\otimes Q^k_{k-1}
\ar[d]|-{\id \otimes i^{\boxprod k}}
\\
\left[ \bigotimes_{u \in \sn(T)} A(u)\right] 
\otimes Y^{\otimes k}
}\]
is an $\Aut(T,\sds)$-cofibration.
\end{lemma}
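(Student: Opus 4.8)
The plan is to build the map in question by iterated application of the equivariant pushout product axiom (Lemma~\ref{prop:equivariant-pushout-product}), peeling off one tensor factor at a time, starting from the $Q$-construction and then incorporating the normal-vertex decorations.

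First I would analyze the ``inner'' map $Q^k_{k-1} \to Y^{\otimes k}$ as a $\Sigma_k$-equivariant cofibration. This is the standard fact (already implicit in Definition~\ref{one-colored-q} and used in \cite{harper-jpaa}): by induction on $q$ using the pushouts \eqref{inductive-q-one-colored} together with Lemma~\ref{prop:equivariant-pushout-product} (applied with $G = \Sigma_{t-q}\times\Sigma_q$, $\Gamma$ trivial, or more precisely using the Quillen adjunction of Definition~\ref{aut-to-sigma} to push up to $\Sigma_t$), one sees that each map $Q^t_{q-1}\to Q^t_q$ is a $\Sigma_t$-cofibration, hence so is the composite $Q^t_0 = X^{\otimes t}\to Y^{\otimes t} = Q^t_t$; more to the point, $Q^k_{k-1}\to Y^{\otimes k}$ is a $\Sigma_k$-cofibration. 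Since every object of a nice $\calm$ is cofibrant, $Q^k_{k-1}$ is itself cofibrant as a $\Sigma_k$-object.

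Next I would bring in the normal-vertex decorations. Set $N = \bigotimes_{u\in\sn(T)} A(u)$. Because $A$ is $\Sigma$-cofibrant, each $A(u)\in\calm^{\Sigma_{\smallbr{$[\uc]$}}^{\smallop}}$ is cofibrant, so $N$ is cofibrant as an object equipped with the action of the product of the relevant profile-automorphism groups. The group $\Aut(T,\sds)$ acts on $N\otimes Q^k_{k-1}$ and on $N\otimes Y^{\otimes k}$, and this action splits compatibly: on the $N$-factor it permutes and acts on normal vertices, and on the $Q$/$Y$-factor it induces, via the action on the set of $k$ distinguished vertices, a homomorphism $\Aut(T,\sds)\to \Sigma_k$. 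So $\Aut(T,\sds)$ is (a subgroup of) a group of the form $G\rtimes\Gamma$ with $G$ acting on $N$ and $\Gamma = \Sigma_k$ acting on $Q^k_{k-1}$ and $Y^{\otimes k}$ — this is exactly the shape needed for Lemma~\ref{prop:equivariant-pushout-product}. Apply that lemma with $X\rightsquigarrow N$, $A\rightsquigarrow Q^k_{k-1}$, $B\rightsquigarrow Y^{\otimes k}$, and the cofibration $i\rightsquigarrow (Q^k_{k-1}\to Y^{\otimes k})$ (note $N$ plays the role of the cofibrant object $Y\otimes$--, using $\varnothing\to N$ as the cofibration, so that the pushout--product reduces to $\id_N\otimes i$); one concludes that $\id_N\otimes i^{\boxprod k}\colon N\otimes Q^k_{k-1}\to N\otimes Y^{\otimes k}$ is a cofibration in $\calm^{(G\rtimes\Gamma)}$, and hence, restricting along $\Aut(T,\sds)\hookrightarrow G\rtimes\Gamma$ (restriction preserves cofibrations for these projective structures), an $\Aut(T,\sds)$-cofibration, which is the claim.

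The main obstacle is bookkeeping the group actions: one must check carefully that $\Aut(T,\sds)$ really does sit inside a semidirect product $G\rtimes\Sigma_k$ in a way compatible with the tensor decomposition $N\otimes Q^k_{k-1}$, i.e.\ that an automorphism of the marked tree that permutes distinguished vertices acts on the $Q^k_{k-1}$-factor precisely through the corresponding element of $\Sigma_k$ (using that all distinguished vertices have profiles in the single orbit $[s]$, so they are interchangeable), while its effect on normal vertices is absorbed into the $G$-action on $N$. Granting this identification — and the elementary fact that restriction along a subgroup inclusion sends projective cofibrations to projective cofibrations — the statement follows formally from Lemma~\ref{prop:equivariant-pushout-product} and the cofibrancy of $N$ and of $Q^k_{k-1}\to Y^{\otimes k}$.
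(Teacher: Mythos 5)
Your overall strategy---feeding $\varnothing\to N$ (with $N=\bigotimes_{u\in\sn(T)}A(u)$) and $i^{\boxprod k}$ into Lemma~\ref{prop:equivariant-pushout-product}---is the right one, but there is a genuine gap at the central step: the identification of the semidirect product decomposition of $\Aut(T,\sds)$ and, with it, the equivariant cofibrancy of $N$. You propose $\Aut(T,\sds)\le G\rtimes\Sigma_k$ with ``$G$ acting on $N$ and $\Sigma_k$ acting on $Q^k_{k-1}$.'' That is not the shape Lemma~\ref{prop:equivariant-pushout-product} requires: there $\Gamma$ is the group acting on the factor that must be an \emph{equivariant} cofibration (here $\varnothing\to N$), while $G$ acts only on the other factor. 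So what you actually need is that $N$ is projectively cofibrant over the part of $\Aut(T,\sds)$ that permutes isomorphic branches of $T$ (and hence permutes the tensor factors of $N$ among themselves). Your justification---each $A(u)$ is cofibrant over its own profile group, hence $N$ is cofibrant over the product of those groups---does not give this: cofibrancy over $\prod_u \Sigma_{[\cdot]}$ says nothing about cofibrancy over a group permuting the factors $A(u)$ (compare: $Z$ cofibrant does not make $Z^{\otimes 2}$ projectively $\Sigma_2$-cofibrant). This is exactly where the hypothesis that $A$ is $\Sigma$-cofibrant must do real work, and where the conclusion fails for, say, the commutative operad. The paper supplies the missing mechanism by grafting $(T,\sds)$ at the root as $t_n\bigl((T_1,\sds_1),\dots,(T_n,\sds_n)\bigr)$, grouping the branches into isomorphism classes with multiplicities $n_i$, obtaining $\Aut(T,\sds)\cong\bigl(\prod_i\Aut(T_{j_i},\sds_{j_i})^{\times n_i}\bigr)\rtimes\bigl(\prod_i\Sigma_{n_i}\bigr)$, and observing that $\Gamma=\prod_i\Sigma_{n_i}$ acts on the root decoration $A(r)$ through a subgroup of the profile group of the root; the $\Sigma$-cofibrancy of $A$ \emph{at the root vertex} then makes $A(r)$, and hence $N$, $\Gamma$-cofibrant. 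None of this appears in your sketch, and the decomposition you name (distinguished versus normal vertices, with $\Gamma=\Sigma_k$) does not produce it.

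A secondary point: your Step~1 claim that $Q^k_{k-1}\to Y^{\otimes k}$ is a projective $\Sigma_k$-cofibration is false in general. For $i=(\varnothing\to\tensorunit)$ in $\sset$, the map $i^{\boxprod k}$ is $\varnothing\to\tensorunit$ with trivial $\Sigma_k$-action, which is not projectively cofibrant; the inductive argument you invoke breaks down because the left column of \eqref{inductive-q-one-colored} requires $X^{\otimes(t-q)}$ to be $\Sigma_{t-q}$-projectively cofibrant, which is the same issue one level down. Fortunately the strong form is not needed: Lemma~\ref{prop:equivariant-pushout-product} only asks that the map of $G\rtimes\Gamma$-objects be an \emph{underlying} cofibration, which $i^{\boxprod k}$ is by the pushout-product axiom, and this is all the paper's proof uses. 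Your observation that restriction along a subgroup inclusion preserves projective cofibrations is correct, but it cannot substitute for the missing $\Gamma$-cofibrancy of $N$.
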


\begin{proof}
As in \cite{bm03} Lemma 5.9, each $(T,\sds)$ has a grafting decomposition as
\[
(T,\sds) = t_n\bigl((T_1,\sds_1), \ldots , (T_n,\sds_n)\bigr),
\]
where 
\begin{itemize}
\item
$t_n$ is the $n$-corolla;
\item
$\sds = \sds_1 \amalg \cdots \amalg \sds_n$ if the top vertex is not distinguished and $\sds =  \sds_1 \amalg \cdots \amalg \sds_n \amalg t_n$ if the top vertex is distinguished.
\end{itemize}
Let 
\[ (T_{j_1}, \sds_{j_1}), \dots, (T_{j_k}, \sds_{j_k}) \in \{(T_1,\sds_1), \dots, (T_n,\sds_n)\} \]
be such that each $(T_\ell,\sds_\ell)$ is isomorphic to exactly one $(T_{j_i}, \sds_{j_i})$,
and let
\[
	n_i = \left|\left\{  (T_\ell,\sds_\ell) \, \middle| \, (T_\ell,\sds_\ell) \cong  (T_{j_i}, \sds_{j_i}) \right\} \right|.
\]
There is a decomposition of the automorphism group,
\[
\Aut(T,\sds) \cong 
\Bigl( 
\underbrace{\prod_{i=1}^k \Aut(T_{j_i}, \sds_{j_i})^{\times n_i}}_{G}
\Bigr)
\rtimes
\Bigl(\underbrace{\prod_{i=1}^k \Sigma_{n_i}}_{\Gamma}\Bigr),
\]
where each $n_i \geq 1$ and $n_1 + \cdots + n_k = n$.
\begin{enumerate}
\item
The map $i^{\boxprod k}$ is a cofibration in $\calm$ by the pushout-product axiom.  Furthermore, it has a right $\Aut(T,\sds)$-action (ie a $G \rtimes \Gamma$-action) because isomorphisms preserve distinguished vertices.

\item  
Since $A(r)$ is $\Gamma$-cofibrant (where $r$ is the vertex at the root) and $\Gamma$ acts on $\bigotimes_{n(T) \setminus r} A(u)$ by permuting tensor factors, we know that $\bigotimes_{n(T)} A(u)$ is $\Gamma$-cofibrant.


\end{enumerate}
These two facts and Lemma~\ref{prop:equivariant-pushout-product} together imply that
\[
\id \otimes i^{\boxprod k} =
\left[\varnothing \to \bigotimes A(u)\right]
\boxprod i^{\boxprod k}
\]
is a $G \rtimes \Gamma$-cofibration.
\end{proof}

\begin{lemma}
\label{left-proper-key}
Suppose that $\calm$ is a nice monoidal model category, and that $i : X \to Y$ is a cofibration in $\calm$, regarded as a map in $\calm^{\nofc}$ concentrated at the $s$-entry for some $s \in \nofc$.  Suppose we have a diagram
\begin{equation}
\label{pushout-b}
\nicexy{
\opc \comp X \ar[d]_{i_*} \ar[r] \pushout
& A \ar[d]_-{h^A} \ar[r]^-{f}_-{\sim} \pushout
& B \ar[d]^-{h^B}
\\
\opc \comp Y \ar[r]
& A_{\infty} \ar[r]^-{f_{\infty}}
& B_{\infty}
}
\end{equation}
in $\alg(\opc) \cong \operadsigmac$ in which both squares are pushouts and $f : A \to B$ is a weak equivalence between $\Sigma$-cofibrant operads.  Then $f_{\infty}$ is also a weak equivalence between $\Sigma$-cofibrant operads.
\end{lemma}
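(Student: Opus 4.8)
The plan is to prove the statement one orbit $[r]\in\nofc$ at a time, by filtering both pushout squares in \eqref{pushout-b} via Lemma~\ref{opc-jt} and comparing them stage by stage. I would begin with two preliminary observations. First, since the model structure on $\operadsigmac\cong\alg(\opc)$ has weak equivalences created in $\calm^{\nofc}$, and $A,B$ are $\Sigma$-cofibrant, every entry $A\duc$, $B\duc$ is cofibrant in $\calm$ (projective cofibrations have underlying cofibrations) and $f$ restricts to a weak equivalence $A\duc\to B\duc$ of cofibrant objects; in particular $A(u)\to B(u)$ is such a map for each vertex $u$ of a tree. Second, since every object of $\calm$ is cofibrant, $\calm$ is left proper, and hence so is the projective model category $\calm^{\Sigma_{[r]}}$ for each $[r]$ (its cofibrations and weak equivalences are detected in $\calm$, and pushouts are computed objectwise). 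Then I would apply Lemma~\ref{opc-jt} twice: once to the left square, yielding a filtration $A=A_0\to A_1\to\cdots\to A_\infty$ on each orbit $[r]$; and once to the outer left-then-right pushout square, whose pushout is $B_\infty$ by pasting and whose input map is $\opc\comp X\to A\xrightarrow{f}B$, yielding $B=B_0\to B_1\to\cdots\to B_\infty$. Both filtrations are indexed by the same sets of reduced trees $(T,\sds)$ --- the indexing in \eqref{opc-jt-pushout} depends only on $[r]$, on the orbit $[s]$, and on $k$ --- the only difference being that a normal vertex $u$ is decorated by $A(u)$ in the first and by $B(u)$ in the second. Consequently $f$ induces a compatible ladder of maps $f_k:A_k([r])\to B_k([r])$ with $f_0=f$, and $f_\infty$ is recovered on $[r]$ as $\colim_k f_k$.

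The heart of the argument is the inductive claim that, for every $k\geq0$ and every $[r]$, the map $f_k:A_k([r])\to B_k([r])$ is a weak equivalence in $\calm^{\Sigma_{[r]}}$ and $A_k([r]),B_k([r])$ are cofibrant there. The case $k=0$ is the hypothesis on $f$ together with $\Sigma$-cofibrancy of $A$ and $B$. For the inductive step I would consider the cube whose two opposite faces are the pushout squares \eqref{opc-jt-pushout} for $A$ and for $B$, joined along the remaining direction by the maps induced by $f$; within each of those faces the left-hand vertical map is $\amalg(\id\otimes i^{\boxprod k})\otimes_{\Aut(T,\sds)}\id$. By Lemma~\ref{equivariant-pushout-product} each $\id\otimes i^{\boxprod k}$ is an $\Aut(T,\sds)$-cofibration, so applying the left Quillen functor $(-)\cdot_{\Aut(T,\sds)}\Sigma_{[r]}$ of Definition~\ref{aut-to-sigma} and then a coproduct shows these left vertical maps are cofibrations in $\calm^{\Sigma_{[r]}}$. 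Of the three maps of the cube running in the remaining direction, the one on the top-right corners is $A_{k-1}([r])\to B_{k-1}([r])$, a weak equivalence by the inductive hypothesis. The key point is that the maps on the top-left and bottom-left corners are weak equivalences too. For this I would note that $Q^k_{k-1}$ and $Y^{\otimes k}$ are cofibrant in $\calm$ (every object is), so the equivariant pushout-product axiom (Lemma~\ref{prop:equivariant-pushout-product}), applied to the $\Gamma$-cofibration $\varnothing\to\bigotimes_{u\in\sn(T)}A(u)$ --- whose target is $\Gamma$-cofibrant, as shown in the proof of Lemma~\ref{equivariant-pushout-product} --- together with $\varnothing\to Q^k_{k-1}$ (respectively $\varnothing\to Y^{\otimes k}$), shows that $\left[\bigotimes_{u\in\sn(T)}A(u)\right]\otimes Q^k_{k-1}$ and $\left[\bigotimes_{u\in\sn(T)}A(u)\right]\otimes Y^{\otimes k}$ are cofibrant in $\calm^{\Aut(T,\sds)}$, and likewise with $B$-decorations. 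Since $A(u)\to B(u)$ is an underlying weak equivalence of cofibrant objects, the induced maps on these tree decorations are weak equivalences of cofibrant objects in $\calm^{\Aut(T,\sds)}$; applying $(-)\cdot_{\Aut(T,\sds)}\Sigma_{[r]}$ and then the coproduct --- both of which preserve weak equivalences between cofibrant objects --- gives the claim. The \emph{gluing lemma} in the left proper model category $\calm^{\Sigma_{[r]}}$ then shows $f_k$ is a weak equivalence; and since $A_{k-1}([r])\to A_k([r])$ is a pushout of a cofibration out of a cofibrant object, $A_k([r])$ --- and likewise $B_k([r])$ --- is cofibrant, completing the induction.

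Finally I would pass to the colimit. For each $[r]$, the object $A_\infty([r])=\colim_k A_k([r])$ is a countable composition of cofibrations out of the cofibrant object $A([r])$, hence cofibrant in $\calm^{\Sigma_{[r]}}$; as $[r]$ ranges over $\nofc$ this says $A_\infty$ is $\Sigma$-cofibrant, and the same applies to $B_\infty$. Moreover $f_\infty([r])=\colim_k f_k$ is a filtered colimit of weak equivalences, hence a weak equivalence in $\calm$ because in a nice $\calm$ the weak equivalences are closed under filtered colimits; as this holds for every $[r]$, the map $f_\infty$ is a weak equivalence in $\calm^{\nofc}$ and therefore in $\operadsigmac$, as required. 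I expect the main obstacle to be the equivariant bookkeeping in the inductive step --- in particular, verifying that the tree-decoration corners are cofibrant in $\calm^{\Aut(T,\sds)}$, which is precisely where the $\Sigma$-cofibrancy of $A$ and $B$ enters --- while the rest is a routine application of the gluing lemma, once one notices that the ``nice'' hypothesis forces $\calm$, and hence each $\calm^{\Sigma_{[r]}}$, to be left proper.
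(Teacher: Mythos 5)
Your proposal follows essentially the same route as the paper's proof: filter both pushouts orbitwise via Lemma~\ref{opc-jt}, show by induction on the filtration stage that each $f_k$ is a weak equivalence, use Lemma~\ref{equivariant-pushout-product} to supply the equivariant cofibration and cofibrancy conditions, and compare the two stage-$k$ pushout squares in a commutative cube before passing to the (filtered) colimit. The only cosmetic differences are that you invoke the gluing lemma in the left proper category $\calm^{\Sigma_{[r]}}$ (left properness following from every object of $\calm$ being cofibrant) where the paper uses Hovey's cube lemma (5.2.6), and that you justify the colimit step via the filtered-colimit clause of niceness where the paper cites Hirschhorn (15.10.12(1)); both choices are interchangeable here.
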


\begin{proof}

Weak equivalences in $\alg(\opc)$ are created entry-wise in $\calm$.  The outer rectangle in \eqref{pushout-b} is also a pushout.  It follows that $h_k^A$ and $h_k^B$ are filtered in such a way that for each orbit $[r]$, the $[r]$-entry of the $k$-th map is a pushout as in \eqref{opc-jt-pushout}.   There is a commutative ladder diagram
\[
\nicexy{
A([r]) \ar@{=}[r] \ar[d]_-{f}  &
A_0([r]) \ar[d]_-{f_0} \ar[r]^-{h_1^A}
& A_1([r]) \ar[d]_-{f_1} \ar[r]^-{h_2^A}
& \cdots \ar[r]
& \colim A_k([r]) = A_\infty([r]) \ar[d]^-{f_\infty}
\\
B([r]) \ar@{=}[r]  &
B_0([r]) \ar[r]^-{h_1^B}
& B_1([r]) \ar[r]^-{h_2^B}
& \cdots \ar[r]
& \colim B_k([r]) = B_\infty([r])
}\]
in $\calm^{\Sigma_{[r]}}$. 

We now argue that all the horizontal maps $h_k^A$ and $h_k^B$ are cofibrations in $\calm^{\Sigma_{[r]}}$ and all the objects in the ladder diagram are cofibrant in $\calm^{\Sigma_{[r]}}$.
Each coproduct summand map on the left of \eqref{opc-jt-pushout} is a $\Sigma_{[r]}$-cofibration since $(-) \dotover{\Aut(T,\sds)} \sigmabrr$ is a left Quillen functor $\calm^{\Aut(T,\sds)}\to \calm^{\Sigma_{[r]}}$ and each $\id \otimes i^{\boxprod k}$ is an $\Aut(T,\sds)$-cofibration by Lemma ~\ref{equivariant-pushout-product}. But cofibrations are closed under coproducts and pushouts, so each $h_k^A$ and $h_k^B$ is a cofibration in $\calm^{\Sigma_{[r]}}$. The fact that all objects are cofibrant now follows from the $\Sigma$-cofibrancy of $A$ and $B$.

By \cite{hirschhorn} (15.10.12(1)), in order to show that the map $f_\infty$ is a weak equivalence between cofibrant objects in $\calm^{\Sigma_{[r]}}$, it suffices to show that all the vertical maps $f_k$, with $0 \leq k < \infty$, are weak equivalences by induction on $k$.

The map $f_0$ is a weak equivalence by assumption.  Suppose $k \geq 1$. Consider the commutative cube in $\calm^{\Sigma_{[r]}}$, where the coproducts are taken over the same sets of trees as in \eqref{opc-jt-pushout}:

\begin{equation*}
\label{ab-pushout-cube}
\nicexy@C-1.3cm{
\coprod
\Bigl\{[\bigotimes A(u)] \otimes Q^k_{k-1}\Bigr\} 
\dotover{\Aut(T,\sds)} \sigmabrr
\ar[dd]_-{\amalg (\Id \otimes i^{\boxprod k})_*} 
 \ar@(d,l)[dr]_-{f_*} \ar[rr] 
&&  A_{k-1}([r])
\ar[dr]^-{f_{k-1}} \ar'[d][dd] &
\\
& \coprod 
\Bigl\{[\bigotimes B(u)] \otimes Q^k_{k-1}\Bigr\} 
\dotover{\Aut(T,\sds)} \sigmabrr
\ar[dd] \ar[rr] 
&& B_{k-1}([r]) \ar[dd]
\\
\coprod 
\Bigl\{[\bigotimes A(u)] \otimes Y^{\otimes k}\Bigr\}  
\dotover{\Aut(T,\sds)} \sigmabrr
\ar@(d,l)[dr]_-{f_{*}} \ar'[r][rr] 
&& A_k([r])  \ar[dr]^-{f_{k}} &
\\
& \coprod 
\Bigl\{[\bigotimes B(u) ]\otimes Y^{\otimes k}\Bigr\} 
\dotover{\Aut(T,\sds)} \sigmabrr
\ar[rr] 
&& B_k([r])
}
\end{equation*}
Both the back face (with $A$'s) and the front face (with $B$'s) are pushout squares, and the maps from the back square to the front square are all induced by $f$.  Moreover, $f_{k-1}$ is a weak equivalence by the induction hypothesis.  By Lemma~\ref{equivariant-pushout-product}, all the objects in the diagram are cofibrant in $\calm^{\sigmabrr}$ and the vertical and diagonal maps are $\sigmabrr$-cofibrations. To see that $f_k$ in the above diagram is a weak equivalence, it is enough to show, by the Cube Lemma \cite{hovey} (5.2.6), that both maps labelled as $f_*$ are weak equivalences.

To see that the top $f_*$ in the above diagram is a weak equivalence, note that a coproduct of weak equivalences between cofibrant objects is again a weak equivalence by Ken Brown's Lemma \cite{hovey} (1.1.12).   
The left Quillen functor (definition \ref{aut-to-sigma}) $(-) \dotover{\Aut(T,\sds)} \sigmabrr$ takes $\Aut(T,\sds)$-cofibrations between $\Aut(T,\sds)$-cofibrant objects to $\sigmabrr$-cofibrations between $\sigmabrr$-cofibrant objects.
Now Ken Brown's Lemma again says that it is enough to show that within each coproduct summand, the map
\begin{equation}
\label{angq-bngq}
\nicexy{[\bigotimes A(u)] \otimes Q^k_{k-1} \ar[r]^{f_*} 
& [\bigotimes B(u)] \otimes Q^k_{k-1}}
\end{equation}
is a weak equivalence between $\Aut(t,\sds)$-cofibrant objects.  Recall that weak equivalences in any diagram category in $\calm$ are defined entrywise.  The map
\[
\nicexy{[\bigotimes A(u)] \ar[r]^-{f_*} & [\bigotimes B(u)]}
\]
is a finite tensor product of entries of $f$, each of which is a weak equivalence in $\calm$.  So this $f_*$ is a weak equivalence between cofibrant objects, and tensoring this map with $Q^k_{k-1}$ yields a weak equivalence.

A similar argument with $Y^{\otimes k}$ in place of $Q^k_{k-1}$ shows that the bottom $f_*$ in the commutative diagram is also a weak equivalence.  Therefore, as discussed above, $f_k$ is a weak equivalence, finishing the induction.

\end{proof}

\begin{theorem}
\label{oalg-model-left-proper}
If $\calm$ is a nice monoidal model category,
then the cofibrantly generated model structure on $\alg(\opc) \cong \operadsigmac$ in Corollary~\ref{oalg-model-operad} is left proper relative to the class of $\Sigma$-cofibrant operads.
\end{theorem}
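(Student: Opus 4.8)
The plan is to deduce the theorem formally from Lemma~\ref{left-proper-key}, which already carries all the homotopical content. Reading the right-hand square of~\eqref{pushout-b} as a pushout, that lemma asserts precisely that the cobase change of a weak equivalence between $\Sigma$-cofibrant operads along a cofibration obtained by pushing out a map $\opc\comp i$ --- with $i$ a cofibration of $\calm$ concentrated at a single entry of $\nofc$ --- is again a weak equivalence between $\Sigma$-cofibrant operads. By Theorem~\ref{oalg-model} and Corollary~\ref{oalg-model-operad}, the generating cofibrations of $\operadsigmac\cong\alg(\opc)$ are exactly the maps $\opc\comp i$ with $i$ a generating cofibration of $\calm$ concentrated at a single entry of $\nofc$, so Lemma~\ref{left-proper-key} covers cobase change along every generating cofibration; the remaining work is to propagate this to arbitrary cofibrations.

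So let $f\colon A\to B$ be a weak equivalence between $\Sigma$-cofibrant operads and $g\colon A\to C$ a cofibration; form the pushout $B\sqcup_A C$ and let $\bar f\colon C\to B\sqcup_A C$ be the cobase change of $f$ along $g$. We must show $\bar f$ is a weak equivalence. First I would reduce to the case that $g$ is a relative $\opc\comp\sI$-cell complex: since $\operadsigmac$ is strongly cofibrantly generated, the small object argument factors $g$ as $A\xrightarrow{j}C'\xrightarrow{p}C$ with $j$ a relative $\opc\comp\sI$-cell complex and $p$ an acyclic fibration, and a lift in the square with sides $j,g,p,\id_C$ exhibits $g$ as a retract of $j$ in the category of operads under $A$. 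Because cobase change along $f$ is functorial, this exhibits $\bar f$ as a retract, in the arrow category, of the cobase change $f'\colon C'\to B\sqcup_A C'$ of $f$ along $j$; since weak equivalences are closed under retracts, it suffices to treat $g=j$. After reindexing we may take $j$ to be a transfinite composition $A = C_0\to C_1\to\cdots\to C_\lambda = C$ in which each $C_\alpha\to C_{\alpha+1}$ is a pushout of a single map $\opc\comp i_\alpha\in\opc\comp\sI$.

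Now put $B_\alpha = B\sqcup_A C_\alpha$ and let $\phi_\alpha\colon C_\alpha\to B_\alpha$ be the cobase change of $f$ along $A\to C_\alpha$, so that $\phi_0 = f$, $\phi_\lambda = \bar f$, and (by pasting of pushout squares) $B_\alpha\to B_{\alpha+1}$ is a pushout of $\opc\comp i_\alpha$ with $\phi_{\alpha+1}$ the cobase change of $\phi_\alpha$ along $C_\alpha\to C_{\alpha+1}$. I would then show by transfinite induction on $\alpha$ that $C_\alpha$ and $B_\alpha$ are $\Sigma$-cofibrant and that $\phi_\alpha$ is a weak equivalence; the case $\alpha = 0$ is the hypothesis. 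At a successor ordinal the relevant diagram is exactly~\eqref{pushout-b} of Lemma~\ref{left-proper-key}, with $C_\alpha, B_\alpha, \phi_\alpha, i_\alpha$ playing the roles of $A, B, f, i$, so that lemma gives that $\phi_{\alpha+1}$ is a weak equivalence between $\Sigma$-cofibrant operads. At a limit ordinal $\mu$ I would argue entrywise: for each orbit $[r]$, Lemma~\ref{opc-jt} together with Lemma~\ref{equivariant-pushout-product} --- exactly as in the proof of Lemma~\ref{left-proper-key} --- shows that each $C_\alpha([r])\to C_{\alpha+1}([r])$ and each $B_\alpha([r])\to B_{\alpha+1}([r])$ is a cofibration between cofibrant objects in $\calm^{\Sigma_{[r]}}$, so that \cite{hirschhorn} (15.10.12) applied to the natural transformation $\{\phi_\alpha([r])\}_{\alpha<\mu}$ of $\mu$-sequences identifies $\phi_\mu([r]) = \colim_{\alpha<\mu}\phi_\alpha([r])$ as a weak equivalence between cofibrant objects. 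Since weak equivalences and $\Sigma$-cofibrancy are both detected entrywise, the induction goes through, and taking $\alpha = \lambda$ gives that $\bar f = \phi_\lambda$ is a weak equivalence.

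The genuine obstacle lies not in the theorem but in Lemma~\ref{left-proper-key}, which is already in hand; in the assembly above only two steps need care. The first is the reduction to cell complexes: this is legitimate precisely because the retract is taken under the \emph{fixed} domain $A$, so that no $\Sigma$-cofibrancy hypothesis on $C'$ is needed. The second is the limit-ordinal step, which relies not merely on the statement of Lemma~\ref{left-proper-key} but on the fact, visible from its proof, that the cell attachments are entrywise cofibrations between cofibrant objects in the categories $\calm^{\Sigma_{[r]}}$, so that the standard transfinite comparison result for sequences of cofibrations between cofibrant objects applies; I expect this bookkeeping to be the fiddliest part, though it introduces no new ideas.
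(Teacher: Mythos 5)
Your proposal is correct and follows exactly the paper's route: the paper likewise reduces a general cofibration to a retract of a relative $(\opc\comp\sI)$-cell complex and then runs a transfinite induction whose successor steps are precisely Lemma~\ref{left-proper-key}. You have simply written out the retract-under-$A$ reduction and the limit-ordinal bookkeeping (via the entrywise cofibrations supplied by Lemmas~\ref{opc-jt} and~\ref{equivariant-pushout-product}) that the paper compresses into one sentence.
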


\begin{proof}
The set of generating cofibrations in $\alg(\opc) \cong \operadsigmac$ is $\opc \comp \sI$, where $\sI$ is the set of generating cofibrations in $\calm^{\nofc}$, each of which is concentrated in one entry and is a generating cofibration of $\calm$ there.  A general cofibration in $\alg(\opc)$ is a retract of a relative $(\opc \comp \sI)$-cell complex. So a retract and transfinite induction argument reduces the proof to the situation in Lemma \ref{left-proper-key}.
\end{proof}

\begin{remark}\label{remark_on_bb}
An anonymous referee has pointed out that an alternative proof of Lemma \ref{left-proper-key} and Theorem \ref{oalg-model-left-proper} can be obtained using the machinery developed in \cite{bb14}.
Specifically, a modification of the proof of \cite[8.1]{bb14}, together with \cite[2.11, 2.14]{bb14} would reproduce these results.
The filtration on the pushout \eqref{pushout-b} would be different from the one we have used here, instead being based on ``classifiers.''
\end{remark}

\section{Categories of Operads Are Not Left Proper}\label{counter-example}

In this section we present an illuminating counter-example to the category of $\fC$-colored operads being left proper. The example is due to Bill Dwyer, and we thank him for allowing us to present it in this paper. 

Let $\calm$ be the category of simplicial sets with the standard (Kan) model category structure and fix $\fC=\{*\}$. In other words, we are working in just regular simplicial operads.  Let $\varnothing$ denote the initial operad, and let $\varnothing_+$ denote the operad constructed by attaching a singleton in arity $0$.  
In other words,
\begin{align*}
	\varnothing(n) &=
	\begin{cases}
		\{ \id \} & n = 1 \\
		\varnothing & n \neq 1
	\end{cases}
	&
	\varnothing_+(n) &=
	\begin{cases}
		* & n=0 \\
		\{ \id \} & n = 1 \\
		\varnothing & n > 1.
	\end{cases}
\end{align*}
The inclusion $i:\varnothing\rightarrow \varnothing_+$ is a cofibration of operads. 

Given an operad $A$, we can construct the pushout 
\[\nicexy{
\varnothing\ar[d]^{i}\ar[r] & A\ar[d]\\
\varnothing_+\ar[r] & A_+}
\]
where
$A_+(0) = \coprod_{j} A(j)/\Sigma_{j}$ and the map $A\rightarrow A_+$ is a cofibration of simplicial operads. 
If $\operadsigmac$ were left proper, then in the pushout diagram
\[\nicexy{
\varnothing \ar[d]^{i}\ar[r] & A\ar[d]\ar[r]^{f} & B\ar[d]\\
\varnothing_+\ar[r] & A_+\ar[r]^{f_+} & B_+}
\]
we would have that if $f$ is a weak equivalence, then $f_+$ is a weak equivalence. 
Taking $A$ to be an $E_\infty$-operad and $B$ to be the commutative operad, we know that $f: A \to B$ is a weak equivalence.
On the other hand, in arity 0, $f_+$ is the map
\[
	f_+(0) : \coprod_{j} A(j)/\Sigma_{j} = \coprod_{j} E\Sigma_j / \Sigma_j = \coprod_{j} B\Sigma_j \to \coprod_{j} B(j) / \Sigma_j = \coprod_j *.
\]
This is not a weak equivalence since $B\Sigma_j$ is not contractible for $j > 1$.

\end{document}